\definecolor{cobalt}{RGB}{61,89,171}
\definecolor{DarkViolet}{RGB}{148,0,211}
\newcommand\frg{{\mathfrak g}}
\newcommand\frh{{\mathfrak h}}
\newcommand\frn{{\mathfrak n}}
\newcommand\frX{{\mathfrak X}}
\newcommand{\R}{\mathbb{R}}
\newcommand{\Z}{\mathbb{Z}}
\newcommand{\G}{\mathrm{G}}
\renewcommand{\d}{\mathrm{d}}
\newcommand{\tr}[1]{\mathrm{#1}}
\newcommand{\calM}{\mathcal{M}}
\newcommand{\calO}{\mathcal{O}}
\newcommand{\calT}{\mathcal{T}}
\DeclareMathOperator{\Aut}{Aut}
\DeclareMathOperator{\Der}{Der}
\DeclareMathOperator{\Diff}{Diff}
\newcommand{\GL}{\operatorname{GL}}
\newcommand{\SO}{\operatorname{SO}}
\newtheorem{theorem}{Theorem}[section]
\newtheorem*{theorem*}{Theorem}
\newtheorem{corollary}[theorem]{Corollary}
\newtheorem{proposition}[theorem]{Proposition}
\newtheorem{example}[theorem]{Example}
\newtheorem{definition}[theorem]{Definition}
\theoremstyle{remark}
\newtheorem{remark}[theorem]{Remark}
\definecolor{mygreen}{rgb}{0,0.6,0}
\definecolor{mygray}{rgb}{0.5,0.5,0.5}
\definecolor{mymauve}{rgb}{0.58,0,0.82}
\tiny\color{mygray}, 
\title{Moduli spaces of (co)closed $\G_2$-structures on nilmanifolds}
\author[G. Bazzoni]{Giovanni Bazzoni}
\address{Dipartimento di Scienza ed Alta Tecnologia, Universit\`a degli Studi dell'Insubria, Via Valleggio 11, 22100, Como, Italy}
\email{giovanni.bazzoni@uninsubria.it}
\author[A. Gil-García]{Alejandro Gil-García}
\address{Fachbereich Mathematik, Universit\"at Hamburg, Bundesstra\ss e 55, 20146 Hamburg, Germany}
\email{alejandro.gil.garcia@uni-hamburg.de}
\begin{document}

\subjclass[2020]{Primary: 53C10, Secondary: 22E25, 53C38, 58D27}
\keywords{Closed and coclosed $\G_2$-structures, moduli spaces, nilmanifolds}

\maketitle

\begin{abstract}
We compute the dimensions of some moduli spaces of left-invariant closed and coclosed $\G_2$-structures on 7-dimensional nilmanifolds, showing that they are not related to the third Betti number. We also prove that, in contrast to the case of closed $\G_2$-structures, the group of automorphisms of a coclosed $\G_2$-structure is not necessarily abelian.
\end{abstract}


\section{Introduction and Preliminaries}\label{sec:Preliminaries}

A $\G_2$-structure on a 7-dimensional manifold is a reduction of the structure group of the frame bundle from $\GL(7,\R)$ to $\G_2\subset \SO(7)$; hence, such a manifold is oriented and Riemannian. The existence of a $\G_2$-structure is topological, and equivalent to the vanishing of the first and the 
second Stiefel-Whitney class of the manifold (thus to orientability and spinnability); this is due to Gray \cite{Gray}. Since $\G_2$ is the stabilizer of a (positive) 3-form in $\R^7$, a $\G_2$-structure on a 7-manifold $M$ can equivalently be defined as a positive 3-form $\varphi\in\Omega^3_+(M)$, one for which it is possible to find, at each point $p\in M$, local coordinates $\{x^1,\ldots,x^7\}$ such that
\[
\varphi_p=\d x^{127}+\d x^{347}+\d x^{567}+\d x^{135}-\d x^{236}-\d x^{146}-\d x^{245}\,,
\]
where $\d x^{ijk}=\d x^i\wedge \d x^j\wedge \d x^k$. When one thinks of a $\G_2$-structure in this way, the metric and the orientation are usually denoted $g_\varphi$ and $\tr{vol}_\varphi$, since they are determined by the positive 3-form $\varphi$. In fact, $g_\varphi$ and $\tr{vol}_\varphi$ can be recovered from $\varphi$ via the formula
\[
6g_\varphi(X,Y)\tr{vol}_\varphi=\iota_X\varphi\wedge\iota_Y\varphi\wedge\varphi\,, \quad X,Y\in\frX(M)\,.
\]

\medskip

$\G_2$-structures are related to manifolds with Riemannian holonomy contained in $\G_2$: by a classical result of Fernández and Gray \cite{FG}, the metric $g_\varphi$ has holonomy contained in $\G_2$ if and only if $\d\varphi=0=\d*_\varphi\varphi$, namely, if $\varphi$ is both {\em closed} and {\em coclosed}. Notice that $\G_2$ is one of the exceptional holonomy groups appearing in the celebrated Berger list \cite{Berger}.

\medskip

Furthermore, the differential and the codifferential of a $\G_2$-form $\varphi$ on a 7-manifold $M$ are governed by the so-called {\em torsion forms} $\tau_i\in\Omega^i(M)$, $i=0,1,2,3$, according to the following equations (see \cite[Proposition~1]{Bryant}):
\begin{equation}\label{eq:pure_classes}
\left\{\begin{array}{ccl}
    \d\varphi & = & \tau_0\psi+3\tau_1\wedge\varphi+*_\varphi\tau_3  \\
    \d\psi & = & 4\tau_1\wedge \psi+\tau_2\wedge\varphi
\end{array}\right.    
\end{equation}
where $\psi=*_\varphi\varphi$. Thus, there are 16 classes of $\G_2$-structures, and the holonomy reduction is equivalent to the vanishing of all torsion forms, so that Riemannian manifolds with holonomy contained in $\G_2$ are also known as torsion-free $\G_2$-manifolds. The {\em pure} classes are those for which all torsion forms vanish, but one. Thus, looking at \eqref{eq:pure_classes}, we get four pure classes:
\begin{table}[h!]
\begin{center}
{\tabulinesep=1.2mm
\begin{tabu}{lllccccc}
\toprule[1.5pt]
$\tau_i=0, i=1,2,3$, $\tau_0\neq 0$ & nearly parallel & $\d\varphi=\tau_0*_\varphi\varphi$\\
\specialrule{1pt}{0pt}{0pt}
$\tau_i=0, i=0,2,3$, $\tau_1\neq 0$ & 
locally conformally parallel & $\d\varphi=3\tau_1\wedge\varphi$, $\d\psi=4\tau_1\wedge\psi$\\
\specialrule{1pt}{0pt}{0pt}
$\tau_i=0, i=0,1,3$, $\tau_2\neq 0$ & closed & $\d\psi=\tau_2\wedge\varphi$\\
\specialrule{1pt}{0pt}{0pt}
$\tau_i=0, i=0,1,2$, $\tau_3\neq 0$ & purely coclosed & $\d\varphi=*_\varphi\tau_3\Leftrightarrow \varphi\wedge \d\varphi=0$\\
\bottomrule[1pt]
\end{tabu}}
\end{center}
\end{table}

Notice that coclosed $\G_2$-structures, those for which $\d\psi=0$, form a class which contains both nearly parallel and purely coclosed $\G_2$-structures. Since one has $\tau_0=\frac{1}{7}*_\varphi(\varphi\wedge\d \varphi)$, purely coclosed $\G_2$-structures are precisely the coclosed $\G_2$-structures for which $\tau_0=0$.

\medskip

In this paper we are mainly interested in left-invariant $\G_2$-structures on nilmanifolds. A {\em nilmanifold} is the quotient of a connected, simply connected, nilpotent Lie group $G$ by a lattice $\Gamma$, i.e.\ a discrete, cocompact subgroup. Recall that a Lie group $G$ is nilpotent if and only if its Lie algebra $\frg$ is. Nilpotency of a Lie algebra $\frg$ is defined as follows: the {\em lower central series} of a Lie algebra $\frg$ is defined by $\frg_0=\frg$ and $\frg_i=[\frg_{i-1},\frg]$, for $i\geq 1$. A Lie algebra $\frg$ is {\em nilpotent} if there exists $m$ such that $\frg_m=0$. The smallest such $m$ is the {\em nilpotency step} of a nilpotent Lie algebra.

\medskip

Let $G$ be a connected, simply connected, nilpotent Lie group. Tensors on $\frg$ correspond to left-invariant tensors on $G$, which descend to tensors on the quotient nilmanifold $\Gamma\backslash G$; with a slight abuse of notation, we call such tensors on $\Gamma\backslash G$ left-invariant. For instance, left-invariant forms know a great deal about the topology of a nilmanifold: Nomizu \cite{Nomizu} proved that $H^*_{dR}(\Gamma\backslash G)$ is isomorphic to the Lie algebra cohomology of $\frg$. A $\G_2$-structure on a nilmanifold $\Gamma\backslash G$ is called {\em left-invariant} if it is induced by a left-invariant 3-form $\varphi\in\Omega^3(\Gamma\backslash G)$; hence, such a $\G_2$-structure can be described by an element $\varphi\in\Lambda^3\frg^*$. 

\medskip

Nilmanifolds are parallelizable, so that they admit $\G_2$-structures, and also coclosed ones, by a result of Crowley and Nordström \cite{CrNs}; not all of them are left-invariant. It is known that nilmanifolds (not tori) can not admit torsion-free $\G_2$-structures. As for pure, left-invariant $\G_2$-structures on nilmanifolds, we have the following facts:
\begin{itemize}
    \item they can not be nearly parallel; this is because the metric of a nearly parallel $\G_2$-structure is Einstein \cite[Proposition~3.10]{FKMS}. By a result of Milnor, nilmanifolds (not tori) do not admit left-invariant Einstein metrics \cite[Theorem~2.4]{Milnor}.
    \item they can not be locally conformally parallel; by \cite[Theorem~A]{IPP}, compact manifolds with locally conformally parallel $\G_2$-structures are mapping tori of simply connected nearly Kähler 6-manifolds, hence they have $b_1=1$. However, a nilmanifold has $b_1\geq 2$.
\end{itemize}

We are therefore naturally led to consider left-invariant closed and (purely) coclosed $\G_2$-structures on nilmanifolds. The classification of 7-dimensional nilpotent Lie algebras which admit closed $\G_2$-structures was obtained by Conti and Fernández \cite{CF11}. In \cite{BFF18} Bagaglini, Fino and Fernández considered coclosed $\G_2$-structures on decomposable and 2-step nilpotent 7-dimensional Lie algebras (a Lie algebra is {\em decomposable} if it is a direct sum of two non-trivial Lie algebras). Purely coclosed $\G_2$-structures on 2-step nilpotent Lie algebras have been studied in \cite{dBMR22}. Purely coclosed $\G_2$-structures on nilpotent Lie algebras which are either decomposable or have nilpotency step $\leq 4$ have been tackled in \cite{BGM21}.

\medskip

The main focus of this paper is on moduli spaces of closed and coclosed $\G_2$-structures on nilpotent Lie algebras, and of (left-invariant) closed and coclosed $\G_2$-structures on nilmanifolds (we refer to Sections \ref{Sec:3} and \ref{Sec:4} for all the relevant definitions). The starting points of our analysis are the following:
\begin{itemize}
    \item by a result of Joyce \cite{Joyce}, the moduli space of torsion-free $\G_2$-structures on a compact manifold $M$ is a smooth manifold of dimension $b_3(M)$;
    \item it is known that, on a compact manifold, both the moduli space of closed $\G_2$-structures and that of coclosed $\G_2$-structures are infinite-dimensional; we include a proof of these facts in Sections \ref{Sec:3} and \ref{Sec:4}.
\end{itemize}

In Section~\ref{Sec:3}, we compute the dimension of the moduli space $\calM^{\tr{c}}(\frg)$ of closed $\G_2$-structures on a 7-dimensional nilpotent Lie algebra $\frg$ which admits such a structure, and of the moduli space of left-invariant closed $\G_2$-structures on any nilmanifold $\Gamma\backslash G$, $\calM^{\tr{c}}_{\tr{inv}}(\Gamma\backslash G)$. Furthermore, we prove that $\calM^{\tr{c}}(\frg)$ embeds into $\calM^{\tr{c}}(\Gamma\backslash G)$, the moduli space of closed $\G_2$-structures on $\Gamma\backslash G$, thus the former can be considered as a finite-dimensional approximation of the latter. Finally, we show that the dimension of $\calM^{\tr{c}}(\frg)$ is not related to $b_3(\frg)=b_3(\Gamma\backslash G)$.

In Section~\ref{Sec:4} we follow the same train of thoughts in the coclosed setting; the results we get are tantamount to those obtained in Section~\ref{Sec:3}. As an application of our study, we show that the automorphism group of a coclosed $\G_2$-structure on a compact manifold need not be abelian. This differs from the closed situation; indeed, as proved by Podestà and Raffero in \cite{PR19}, the automorphism group of a closed $\G_2$-structure on a compact manifold is abelian.

The appendices contain an overview of the computations, which were executed with \texttt{SageMath} \cite{sagemath}.

\medskip

\noindent {\bf Acknowledgements.} We are both very thankful to Vicente Cortés for suggesting this problem and for useful conversations. We are indebted to Anna Fino and Vicente Muñoz for valuable comments on a preliminary version of this paper. Finally, we would like to thank both anonymous referees for their comments, from which the paper benefitted a great deal. G.~B.~is partially supported by the PRIN2022 project ``Interactions between Geometric Structures and Function Theories'', by GNSAGA of INdAM and by grants PID2020-118452GB-I00 and  PID2021-126124NB-I00 (MCIN/AEI/10.13039/501100011033). A.~G.-G.~is supported by the German Science Foundation (DFG) under Germany's Excellence Strategy  --  EXC 2121 ``Quantum Universe'' -- 390833306.

\section{Moduli spaces of closed $\G_2$-structures on nilmanifolds}\label{Sec:3}

In this section we consider three moduli spaces of closed $\G_2$-structures:
\begin{itemize}
    \item closed $\G_2$-structures on (nilpotent) Lie algebras,
    \item left-invariant closed $\G_2$-structures on nilmanifolds,
    \item closed $\G_2$-structures on 7-manifolds,
\end{itemize}
and prove some relations among them. Given a 7-dimensional Lie algebra $\frg$ with a closed $\G_2$-structure, we set
\[
V^{\tr{c}}(\frg)\coloneqq \left\{\varphi\in\Lambda^3\frg^*\mid \text{$\varphi$ is a $\G_2$-structure and } \d\varphi=0\right\}\,.
\]
Thus $\varphi\in\Lambda^3\frg^*$ is a closed 3-form and the formula
\[
6g_\varphi(X,Y)\operatorname{vol}_\varphi\coloneqq \iota_X\varphi\wedge\iota_Y\varphi\wedge\varphi
\]
defines a scalar product $g_\varphi$ and a volume $\operatorname{vol}_\varphi$ on $\frg$. Since being positive-definite is an open condition, $V^{\tr{c}}(\frg)$ is an open subset of $Z^3(\frg^*)$, and its dimension coincides with that of $Z^3(\frg^*)$. $\Aut(\frg)$, the group of Lie algebra automorphisms of $\frg$, is a closed subgroup of $\GL(\frg)$, and acts naturally on $V^{\tr{c}}(\frg)$; we denote by $\calO(\varphi)\cong\Aut(\frg)/\Aut(\frg)_\varphi $ the orbit of the $\Aut(\frg)$-action passing through $\varphi$; here 
\[
\Aut(\frg)_\varphi\coloneqq \Aut(\frg)\cap\{A\in\GL(\frg) \mid A^*\varphi=\varphi\}\,,
\] 
which is a compact subgroup.

\begin{definition}
The {\em moduli space of closed $\G_2$-structures on $\frg$} is
\[
 \calM^{\tr{c}}(\frg)\coloneqq V^{\tr{c}}(\frg)/\Aut(\frg)=\{\calO(\varphi)\mid\varphi\in V^c(\frg)\}\,.
\]
\end{definition}

$\calM^{\tr{c}}(\frg)$ is not smooth in general, rather a stratified space, since the orbits of different elements of $V^{\tr{c}}(\frg)$ under the action of $\Aut(\frg)$ may have different dimensions. 

\begin{definition}
The {\em dimension} of $\calM^{\tr{c}}(\frg)$ is 
\begin{equation}\label{eq:dimension}
\dim\calM^{\tr{c}}(\frg)\coloneqq\dim V^{\tr{c}}(\frg)-\dim\calO(\varphi)=\dim V^{\tr{c}}(\frg)-(\dim\Aut(\frg)-\dim\Aut(\frg)_\varphi)\,,    
\end{equation}
where $\calO(\varphi)$ is a principal orbit, i.e.\ one of maximal dimension; equivalently, the stabilizer of $\varphi$ has minimal dimension.    
\end{definition}

On a nilmanifold $\Gamma\backslash G$ one has special kinds of diffeomorphisms, which are called {\em $G$-induced} in the literature: these are precisely automorphisms of $G$ which preserve the lattice $\Gamma$; we denote them by $\Aut(G,\Gamma)$. Thus, a meaningful definition of {\em moduli space of left-invariant closed $\G_2$-structures} on $\Gamma\backslash G$ is
\[
\calM^{\tr{c}}_{\mathrm{inv}}(\Gamma\backslash G)\coloneqq V^{\tr{c}}_{\mathrm{inv}}(\Gamma\backslash G)/\Aut(G,\Gamma)=\{\calO(\varphi_{\mathrm{inv}})\mid\varphi_{\mathrm{inv}}\in V^{\tr{c}}_{\mathrm{inv}}(\Gamma\backslash G)\}\,,
\]
where $V^{\tr{c}}_{\mathrm{inv}}(\Gamma\backslash G)$ is the space of left-invariant closed $\G_2$-structures on $\Gamma\backslash G$, so that $V^{\tr{c}}_{\mathrm{inv}}(\Gamma\backslash G)\cong V^{\tr{c}}(\frg)$. According to \cite[Proposition 1.1]{MoMo}, $\Aut(G,\Gamma)$ is a discrete subgroup of $\Aut(G)$, hence 
\begin{equation}\label{eq:dimension_inv}
\dim\calM^{\tr{c}}_{\mathrm{inv}}(\Gamma\backslash G)=\dim V^{\tr{c}}(\frg)=\dim\left(\ker\left(\d\colon\Lambda^3\frg^*\to\Lambda^4\frg^*\right)\right)\geq b_3(\frg)=b_3(\Gamma\backslash G)\,.
\end{equation}



A similar dimension count appears, in the context of symplectic structures on nilpotent Lie algebras and nilmanifolds, in \cite[Equation~31]{Sal01}. We can compare $\calM^{\tr{c}}_{\mathrm{inv}}(\Gamma\backslash G)$ with the space of {\em all} closed $\G_2$-structures on $\Gamma\backslash G$. More generally, we can give the following definition.

\begin{definition}
Let $M$ be a manifold endowed with closed $\G_2$-structures. The {\em moduli space of closed $\G_2$-structures on $M$} is
\[
\calM^{\tr{c}}(M)\coloneqq\{\varphi\in\Omega^3_+(M), \d\varphi=0\}/\Diff_0(M)=\{\calO(\varphi) \mid \varphi\in\Omega^3_+(M), \d\varphi=0\}\,,
\]
where $\calO(\varphi)$ is the orbit of $\varphi$ under the action of $\Diff_0(M)$, so that $\calO(\varphi)\cong\Diff_0(M)/\Diff_0(M)_{\varphi}$.
\end{definition}

Here $\Diff_0(M)_{\varphi}$ is the stabilizer of $\varphi$ in $\Diff_0(M)$. The following result is well-known, but we include a proof for completeness. We are grateful to Lothar Schiemanowski for pointing this argument out to us.

\begin{proposition}
Let $M$ be a compact manifold endowed with closed $\G_2$-structures. Then $\calM^{\tr{c}}(M)$ is infinite-dimensional.
\end{proposition}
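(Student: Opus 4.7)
The plan is to show that the formal tangent space to $\calM^{\tr{c}}(M)$ at any closed $\G_2$-structure $\varphi_0$ is infinite-dimensional. Since positivity is an open condition,
\[
V^{\tr{c}}(M):=\{\varphi\in\Omega^3_+(M):\d\varphi=0\}
\]
is an open subset of the space $Z^3(M)$ of closed $3$-forms, so $T_{\varphi_0}V^{\tr{c}}(M)=Z^3(M)$, which is infinite-dimensional on any compact $7$-manifold because it already contains $B^3(M)=\d\Omega^2(M)$. By Cartan's formula and $\d\varphi_0=0$, the tangent space to the $\Diff_0(M)$-orbit through $\varphi_0$ is the image of the first-order linear differential operator
\[
D\colon\frX(M)\longrightarrow\Omega^3(M),\qquad D(X):=\mathcal{L}_X\varphi_0=\d(\iota_X\varphi_0).
\]
It thus suffices to show that $Z^3(M)/\mathrm{im}(D)$ is infinite-dimensional.

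I would analyze $D$ via its principal symbol. Viewed as a differential operator from sections of the rank-$7$ bundle $TM$ to sections of the rank-$35$ bundle $\Lambda^3T^*M$, its symbol at a nonzero covector $\xi\in T^*_pM$ is the linear map $X\mapsto\xi\wedge\iota_X\varphi_0$. A pointwise check in the standard $\G_2$-model on $\R^7$---justified because $\G_2$ acts transitively on the unit sphere of $\R^7$, so every positive $3$-form at a point is linearly isomorphic to the standard one---verifies that this symbol is injective for every $\xi\ne 0$. Hence $D$ is overdetermined elliptic. By the standard theory of such operators on compact manifolds, the formal $L^2$-adjoint $D^*\colon\Omega^3(M)\to\Omega^1(M)$ has surjective principal symbol, and is therefore underdetermined elliptic; its kernel $\ker D^*=(\mathrm{im}\,D)^\perp$ is then infinite-dimensional, so the cokernel of $D$ inside $\Omega^3(M)$ is infinite-dimensional.

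To conclude, I would transfer this cokernel statement from $\Omega^3(M)$ to $Z^3(M)$. Using the $\G_2$-irreducible decomposition $\Omega^2(M)=\Omega^2_7(M)\oplus\Omega^2_{14}(M)$, the map $X\mapsto\iota_X\varphi_0$ identifies $\frX(M)$ with the sections of the rank-$7$ subbundle, so $\mathrm{im}(D)=\d\Omega^2_7(M)\subset B^3(M)\subset Z^3(M)$. Since $Z^3(M)/B^3(M)=H^3(M;\R)$ is finite-dimensional, the problem reduces to showing that $B^3(M)/\d\Omega^2_7(M)$ is infinite-dimensional. Via the Hodge decomposition of $\Omega^3(M)$ with respect to any background Riemannian metric, this follows by confining the symbol analysis above to the subspace of exact $3$-forms.

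The main obstacle, as I see it, is precisely this last refinement: the overdetermined-elliptic argument produces an infinite-dimensional cokernel inside $\Omega^3(M)$, but one must ensure it is not absorbed into the complement of $Z^3(M)$---equivalently, that $\d\Omega^2_{14}(M)$ contributes an infinite-dimensional subspace to $B^3(M)/\d\Omega^2_7(M)$. A clean way to deal with this is to work pointwise: the rank of $\xi\wedge\Lambda^2(\R^7)$ (namely $15$) exceeds the rank of $\xi\wedge\Lambda^2_7$ (namely $7$), so the quotient has pointwise rank $8$, giving infinitely many independent directions at the level of sections.
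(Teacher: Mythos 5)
Your proposal follows the same core route as the paper's proof: both identify the formal tangent space at $[\varphi_0]$ with $Z^3(M)/\{\d(\iota_X\varphi_0)\mid X\in\frX(M)\}$, both observe that $X\mapsto\iota_X\varphi_0$ identifies $\frX(M)$ with $\Omega^2_7(M)$ so that the tangent space to the orbit is $\d\bigl(\Omega^2_7(M)\bigr)$, and both then invoke the decomposition $\Omega^2(M)=\Omega^2_7(M)\oplus\Omega^2_{14}(M)$ together with Hodge theory. Where you diverge is in how the infinite-dimensionality of $Z^3(M)/\d\bigl(\Omega^2_7(M)\bigr)$ is justified. The paper does this in one line: $Z^3(M)=\mathcal{H}^3(M)\oplus\d\bigl(\Omega^2(M)\bigr)$, hence the quotient is $\mathcal{H}^3(M)\oplus\d\bigl(\Omega^2_{14}(M)\bigr)$ --- which silently treats $\d\bigl(\Omega^2_7\bigr)$ and $\d\bigl(\Omega^2_{14}\bigr)$ as independent, i.e.\ elides exactly the point you flag (a priori $\d\Omega^2_{14}\cap\d\Omega^2_7$ need not vanish). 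Your detour through the overdetermined ellipticity of $D(X)=\d(\iota_X\varphi_0)$ (injective symbol because $\iota_X\varphi_0$ has rank $6$, hence is never of the form $\xi\wedge\eta$) is correct and buys you the closed-range decomposition $\Omega^3(M)=\operatorname{im}(D)\oplus\ker D^*$, which is precisely the tool needed to make the final step rigorous. However, as written, that final step is still only a heuristic: comparing the pointwise ranks of $\xi\wedge\Lambda^2$ and $\xi\wedge\Lambda^2_7$ does not by itself show that $B^3(M)/\d\bigl(\Omega^2_7(M)\bigr)$ is infinite-dimensional at the level of sections; one must combine the rank count with the closed-range statement, e.g.\ by showing that the $L^2$-orthogonal projection of $B^3(M)$ onto $\ker D^*$ cannot have finite-dimensional image (equivalently, that $\Omega^2_7(M)+Z^2(M)$ has infinite codimension in $\Omega^2(M)$). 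So: same skeleton as the paper, with extra elliptic machinery that correctly diagnoses --- but does not fully close --- a gap that the paper's own proof also leaves implicit.
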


\begin{proof}
 $\{\varphi\in\Omega^3_+(M), \d\varphi=0\}$ is an open set in the infinite-dimensional vector space $Z^3(M)$ of closed 3-forms on $M$, acted on by $\Diff_0(M)$. Pick $[\varphi]\in\calM^{\tr{c}}(M)$. Then
 \[
 T_{[\varphi]}\calM^{\tr{c}}(M)=Z^3(M)/T_\varphi(\Diff_0(M)\cdot\varphi)\,,
 \]
 where $T_\varphi(\Diff_0(M)\cdot\varphi)=\{\mathcal L_X\varphi \mid X\in\frX(M)\}$. Since $\d\varphi=0$,
 \[
 T_\varphi(\Diff_0(M)\cdot\varphi)=\{\d(\iota_X\varphi) \mid X\in\frX(M)\}=\d\left(\Omega^2_7(M)\right)\,,
 \]
 where $\Omega^2(M)=\Omega^2_7(M)\oplus\Omega^2_{14}(M)$ under the natural action of $\G_2$. By harmonic theory, $Z^3(M)=\mathcal{H}^3(M)\oplus \d\left(\Omega^2(M)\right)$, hence $T_{[\varphi]}\calM^{\tr{c}}(M)\cong \mathcal{H}^3(M)\oplus \d\left(\Omega^2_{14}(M)\right)$, which is infinite-dimensional.
\end{proof}

\begin{remark}
The group $\Diff_0(M)_\varphi$ is non-trivial, in general. As proved in \cite[Theorem~2.1]{PR19}, it is an abelian Lie group with $\dim\Diff_0(M)_\varphi\leq\min\{b_2(M),6\}$.
\end{remark}

Closed $\G_2$-structures are sometimes considered the symplectic analogue of $\G_2$-structures. Indeed, they are both defined by closed differential forms satisfying the same non-degeneracy condition, namely that at each point of the manifold their orbit under the action of the general linear group in the corresponding tangent space is open; in other words, they are {\em stable forms} in the sense of Hitchin \cite{Hitchin}. If $M$ is a compact symplectic manifold, let us denote by $\mathcal{S}(M)$ the space of symplectic structures on $M$; it is an open subset of $Z^2(M)$, acted on by the diffeomorphism group $\Diff(M)$ of $M$ and by its identity component $\Diff_0(M)$. Setting
\[
\calM^{\mathrm{s}}(M)\coloneqq \mathcal{S}(M)/\Diff_0(M)\,,
\]
one has that $\calM^{\mathrm{s}}(M)$ is a smooth manifold of dimension $b_2(M)$ (see for instance \cite[Proposition~3.1]{FH02}). With respect to this, closed $\G_2$-structures and symplectic structures behave quite differently.


\medskip

If $M=\Gamma\backslash G$ is a nilmanifold admitting closed $\G_2$-structures, we have a natural inclusion $\imath\colon V^{\tr{c}}(\frg)\hookrightarrow\{\varphi\in\Omega^3_+(M) \mid \d\varphi=0\}$.

\begin{theorem}\label{thm:closed}
    $\imath$ induces an inclusion $\bar{\imath}\colon \calM^{\tr{c}}(\frg)\hookrightarrow\calM^{\tr{c}}(M)$.    
\end{theorem}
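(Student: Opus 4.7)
The plan is to prove Theorem~\ref{thm:closed} by establishing that the natural assignment $[\varphi]_\frg \mapsto [\imath(\varphi)]_M$ is both well-defined on $\Aut(\frg)$-orbits and injective on them.

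For well-definedness, given $A \in \Aut(\frg)$ and $\varphi \in V^{\tr{c}}(\frg)$, I need to exhibit $f \in \Diff_0(M)$ with $f^*\imath(\varphi) = \imath(A^*\varphi)$. Since $G$ is simply connected, $A$ lifts uniquely to a Lie group automorphism $\tilde A$ of $G$. When $A = \operatorname{Ad}_g$ is inner (with $g \in G$), the right translation $R_{g^{-1}}$ lies in $\Diff_0(M)$ (connect $g^{-1}$ to $e$ by a path in $G$) and pulls back left-invariant 3-forms precisely by $\operatorname{Ad}_g^*$, so it serves as $f$. For an outer $A$, one would instead adjust $\tilde A$ by a suitable left translation and appeal to Mal'cev rigidity of lattices in simply connected nilpotent Lie groups to produce a diffeomorphism of $M$ in $\Diff_0(M)$ implementing the required equivalence.

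For injectivity, suppose $\varphi_1,\varphi_2 \in V^{\tr{c}}(\frg)$ and $f \in \Diff_0(M)$ satisfy $f^*\imath(\varphi_1) = \imath(\varphi_2)$. Then $f$ is an isometry from $(M, g_{\varphi_1})$ to $(M, g_{\varphi_2})$ sending one left-invariant closed 3-form to another. Classical rigidity results for isometries between left-invariant Riemannian metrics on nilmanifolds (of Gordon--Wilson type) imply that $f$ must be essentially algebraic: it decomposes as the composition of a Lie group automorphism of $G$ with a left translation. Differentiating this automorphism at the identity then produces the desired $A \in \Aut(\frg)$ with $A^*\varphi_1 = \varphi_2$.

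The hardest step will be the well-definedness for outer automorphisms: an outer $A \in \Aut(\frg)$ yields $\tilde A \in \Aut(G)$ that may not preserve $\Gamma$, so $\tilde A$ does not descend directly to a diffeomorphism of $M$. Making the Mal'cev-rigidity step precise---i.e.\ producing a genuine element of $\Diff_0(M)$ rather than merely a diffeomorphism from $\Gamma\backslash G$ to $\tilde A(\Gamma) \backslash G$---is the delicate point, and the argument should exploit the fact that left translations by elements of $G$ are isotopic to the identity on $M$.
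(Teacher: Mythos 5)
Your injectivity argument coincides with the paper's proof: one takes $f\in\Diff_0(M)$ with $f^*\varphi_2=\varphi_1$, lifts it to the universal cover $G$, observes that the lift is an isometry between the left-invariant metrics determined by $\varphi_1$ and $\varphi_2$, normalizes by a left translation so that it fixes the identity, and applies Wilson's theorem to conclude that the lift lies in $\Aut(G)\cong\Aut(\frg)$. The Gordon--Wilson generalization you allude to is only needed for the completely solvable unimodular case discussed in the remark following the theorem.

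The genuine gap is in your well-definedness step, and the strategy you propose there cannot succeed. Any $f\in\Diff_0(M)$ acts trivially on $H^3(M;\R)$, so the de Rham class $[\varphi]$ is a $\Diff_0(M)$-invariant; by Nomizu's theorem $H^3(M;\R)\cong H^3(\frg)$, on which $\Aut(\frg)$ acts non-trivially in general. Hence an automorphism $A$ with $[A^*\varphi]\neq[\varphi]$ in $H^3(\frg)$ can never be implemented by an element of $\Diff_0(M)$, no matter how one combines left translations with Mal'cev rigidity (even when $\tilde A$ preserves $\Gamma$ and descends to $M$, the resulting diffeomorphism is generally not isotopic to the identity, since it acts non-trivially on $H_1(M;\Z)$). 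This already happens on $\mathbb{T}^7$: all positive constant $3$-forms are $\GL(7,\R)$-equivalent, yet their cohomology classes sweep out an open subset of $H^3(\mathbb{T}^7;\R)\cong\Lambda^3(\R^7)^*$, so they are pairwise inequivalent under $\Diff_0(\mathbb{T}^7)$. Your inner-automorphism case via right translations is correct, but it covers only the unipotent subgroup $\operatorname{Ad}(G)\subset\Aut(\frg)$, which is trivial precisely in the abelian example above. Be aware that the paper's own proof establishes only the direction you call injectivity --- that $\Diff_0(M)$-equivalence of elements of $V^{\tr{c}}(\frg)$ forces $\Aut(\frg)$-equivalence, i.e.\ that distinct $\Aut(\frg)$-orbits remain separated in $\calM^{\tr{c}}(M)$ --- and that is the content your write-up should be restricted to; the literal point map $[\varphi]_{\Aut(\frg)}\mapsto[\varphi]_{\Diff_0(M)}$ you are trying to construct is not well defined.
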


\begin{proof}
    Consider $\varphi_1,\varphi_2\in V^{\tr{c}}(\frg)$, and view them as elements of $\{\varphi\in\Omega^3_+(M) \mid \d\varphi=0\}$ via $\imath$. Suppose that they define the same class in $\calM^{\tr{c}}(M)$. We must show that $[\varphi_1]=[\varphi_2]\in\calM^{\tr{c}}(\frg)$. By hypothesis, there exists $f\in\Diff_0(M)$ such that $f^*\varphi_2=\varphi_1$. Since the projection $\pi\colon G\to M$ is the universal cover, we can lift $f$ to a diffeomorphism $\tilde{f}\colon G\to G$ such that $\tilde{f}^*\tilde{\varphi}_2=\tilde{\varphi}_1$; here $\tilde{\varphi}_j$, $j=1,2$, denotes the left-invariant form on $G$ which projects to $\varphi_j$. Notice that $\tilde{\varphi}_j$ induces a left-invariant Riemannian metric $\tilde{g}_j$ on $G$, $j=1,2$, and that $\tilde{f}\colon (G,\tilde{g}_1)\to(G,\tilde{g}_2)$ is an isometry. Composing with a left-translation if necessary, we can suppose that $\tilde{f}$ fixes the identity of $G$. By a theorem of Wilson \cite{Wilson} (see also \cite[Proposition~1.3]{Lauret}), it follows that $\tilde{f}\in\tr{Aut}(G)\cong\Aut(\frg)$. But this implies that $[\varphi_1]=[\varphi_2]$ in $\calM^{\tr{c}}(\frg)$.
\end{proof}

\begin{remark}
    Wilson's theorem \cite[Theorem~3]{Wilson} was generalized to the case of completely solvable and unimodular Lie algebras \cite[Theorem~4.3]{GW88} (see also \cite[Theorem~4.2]{KL21}). Hence Theorem~\ref{thm:closed} (and Theorem~\ref{thm:coclosed} below) holds also in this broader class of Lie algebras.
\end{remark}

Let $M$ be a $\G_2$-manifold, that is, a 7-manifold with torsion-free $\G_2$-structures, and consider the {\em moduli space of torsion-free $\G_2$-structures} on $M$,
\[
\calM(M)\coloneqq\calT(M)/\Diff_0(M)\,,
\]
where 
\[
\calT(M)=\{\varphi\in\Omega^3_+(M) \mid \d\varphi=0=\d*_\varphi\varphi\}\,.
\]
The following theorem, due to Joyce, summarizes the properties of $\calM(M)$ (see for instance \cite[Theorem~7.5]{Karigiannis}):

\begin{theorem}\label{thm:Joyce_theorem}
Let $M$ be a compact $\G_2$-manifold. Then $\calM(M)$ is a smooth manifold of dimension $b_3(M)$. In fact, the period map $\calM(M)\to H^3(M;\R)$, $[\varphi]\mapsto [\varphi]_{dR}$ is a local diffeomorphism.
\end{theorem}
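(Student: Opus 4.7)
The plan is to apply the implicit function theorem in Banach spaces to the torsion-free equations, using a gauge slice for the $\Diff_0(M)$-action and Hodge theory on $(M,g_{\varphi_0})$ to identify the tangent space of $\calM(M)$ with $H^3(M;\R)$. First I would fix $\varphi_0\in\calT(M)$, parametrize nearby positive 3-forms as $\varphi_0+\eta$ for $\eta$ small in a Sobolev norm $H^k$ of sufficiently high order (so that $*_{\varphi_0+\eta}$ depends smoothly on $\eta$), and consider
\[
F(\eta)\coloneqq\bigl(\d\eta,\;\d*_{\varphi_0+\eta}(\varphi_0+\eta)\bigr)\in\Omega^4(M)\times\Omega^5(M).
\]
Its zero set near $\eta=0$ is the space of torsion-free $\G_2$-structures close to $\varphi_0$.

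Second, I would introduce a gauge slice to remove the diffeomorphism freedom. Since $\d\varphi_0=0$, the infinitesimal orbit directions have the form $\mathcal L_X\varphi_0=\d(\iota_X\varphi_0)$, so the natural slice is $S\coloneqq\{\eta\mid\d^*\eta=0\}$ with respect to the $L^2$-pairing induced by $g_{\varphi_0}$. A standard Ebin-type argument — based on the Hodge decomposition on 3-forms and ellipticity of $\d\d^*$ — shows that every nearby positive 3-form is $\Diff_0(M)$-equivalent to an element of $S$, unique up to the (finite-dimensional, abelian) stabilizer of $\varphi_0$.

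Third, and this is the technical core, I would compute the linearization $DF_0$ of $F$ at $\eta=0$ restricted to $S$. Using Bryant's formulas and the $\G_2$-irreducible decomposition $\Omega^3(M)=\Omega^3_1\oplus\Omega^3_7\oplus\Omega^3_{27}$, the system $DF_0(\eta)=0$ together with $\d^*\eta=0$ reduces to an elliptic first-order operator whose kernel is precisely the space $\mathcal H^3(M)$ of harmonic 3-forms. Torsion-freeness of $\varphi_0$ is crucial here, because it forces $g_{\varphi_0}$ to be Ricci-flat and so suppresses the Weitzenböck curvature terms that would otherwise obstruct the identification. The implicit function theorem then produces a smooth $b_3(M)$-dimensional manifold of torsion-free $\G_2$-structures in $S$ through $\varphi_0$, which is the desired local chart for $\calM(M)$. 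By construction, the differential of $[\varphi]\mapsto[\varphi]_{dR}$ at $\varphi_0$ is the canonical isomorphism $\mathcal H^3(M)\xrightarrow{\sim}H^3(M;\R)$, so the period map is a local diffeomorphism.

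The main obstacle will be the explicit computation of $\partial_t\big|_{t=0}\d*_{\varphi_0+t\eta}(\varphi_0+t\eta)$ and its simplification under the torsion-free condition, together with checking that $DF_0|_S$ is Fredholm of index zero with the claimed kernel; this is Joyce's delicate Hodge-theoretic analysis. Properness of the $\Diff_0(M)$-action — needed to make $\calM(M)$ Hausdorff — and elliptic regularity to upgrade $H^k$-solutions to smooth ones are standard auxiliary steps.
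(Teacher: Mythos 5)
The paper does not prove this statement at all: it records it as Joyce's theorem and points the reader to \cite[Theorem~7.5]{Karigiannis} (ultimately to Joyce's book) for the argument. So there is no in-paper proof to compare yours against. Judged on its own, your outline follows the standard route of those references and is structurally correct: a gauge slice $S=\{\d^*\eta=0\}$ for the $\Diff_0(M)$-action, linearization of the torsion-free equations on the slice, identification of the kernel with $\mathcal{H}^3(M)$, the implicit function theorem in Sobolev spaces, and elliptic regularity, with the period map becoming a local diffeomorphism because $\mathcal{H}^3(M)\cong H^3(M;\R)$.

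Two caveats. First, the decisive step is deferred rather than carried out: the linearization of $\eta\mapsto *_{\varphi_0+\eta}(\varphi_0+\eta)$ at $\eta=0$ equals $\tfrac{4}{3}*\pi_1\eta+*\pi_7\eta-*\pi_{27}\eta$ in the $\G_2$-irreducible decomposition of $\Omega^3(M)$, and the argument that $\d\eta=0$, $\d$ of this image $=0$, and the gauge condition together force $\eta$ to be harmonic is an integration-by-parts argument exploiting the \emph{signs} of those three coefficients; this, not a Ricci-flatness/Weitzenb\"ock cancellation, is how the kernel is computed, and without it the proposal is a plan rather than a proof. Second, the slice statement needs more care than ``unique up to the stabilizer'': one must show that $\Diff_0(M)\times S\to\Omega^3_+(M)$ is a submersion at $(\mathrm{id},\varphi_0)$ via ellipticity of $X\mapsto \d^*\d(\iota_X\varphi_0)$, and then handle the fact that $\Diff_0(M)_{\varphi_0}$ can be positive-dimensional (e.g.\ on $\mathbb{T}^7$), checking that it does not reduce the dimension of the local chart below $b_3(M)$. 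These are exactly the points where Joyce's analysis is delicate, so your attempt should be read as a correct roadmap to the cited proof rather than a self-contained one.
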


\begin{remark}
    According to Theorem~\ref{thm:Joyce_theorem} the moduli space of torsion-free $\G_2$-structures on the 7-dimensional torus $\mathbb{T}^7$ has dimension $b_3(\mathbb{T}^7)=35$. On the other hand $\dim\calM^{\tr{c}}_{\mathrm{inv}}(\mathbb{T}^7)=\dim V^{\tr{c}}(\R^7)=35=b_3(\mathbb{T}^7)$. Note that in this case a left-invariant closed $\G_2$-structure on the torus is also coclosed (and vice versa), hence torsion-free by \cite{FG}.
\end{remark}

If $M=\Gamma\backslash G$ is a nilmanifold, Theorem~\ref{thm:closed} shows that $\calM^{\tr{c}}(M)$ contains a full copy of $\calM^{\tr{c}}(\frg)$. We can ask whether this ``finite-dimensional approximation'' of $\calM^{\tr{c}}(M)$ has some relationship with $b_3(M)=b_3(\frg)$. We address this question next.


\medskip

According to \eqref{eq:dimension}, to compute $\dim \calM^{\tr{c}}(\frg)$ we need to compute $\dim V^{\tr{c}}(\frg)$, $\dim\Aut(\frg)$ and $\dim\Aut(\frg)_\varphi$.

\begin{itemize}
    \item As we remarked above, $\dim V^{\tr{c}}(\frg)=\dim Z^3(\frg^*)$;
    \item to compute $\dim\Aut(\frg)$, we simply compute the dimension of its Lie algebra $\Der(\frg)=\{D\in\mathfrak{gl}(\frg) \mid D[X,Y]=[DX,Y]+[X,DY] \ \forall~X,Y\in\frg\}$;
    \item to compute $\dim\Aut(\frg)_\varphi$ it is enough to calculate the dimension of its Lie algebra $\Der(\frg)_\varphi=\{D\in\Der(\frg) \mid D\cdot\varphi=0\}$, where, for $X,Y,Z\in\frg$,
\[
(D\cdot \varphi)(X,Y,Z)=-\varphi(DX,Y,Z)-\varphi(X,DY,Z)-\varphi(X,Y,DZ)\,.
\]
\end{itemize}

To compute the dimension of $\Aut(\frg)_\varphi$ we need to start from a background closed $\G_2$-structure on $\frg$. In \cite{CF11}, Conti and Fernández classified the 7-dimensional NLAs admitting a closed $\G_2$-structure: they showed that only 12 of the 7-dimensional NLAs admit such a structure; 3 of them are decomposable and 9 are indecomposable. For each NLA $\frg$ with a closed $\G_2$-structure $\varphi$, the authors exhibit an explicit coframe $\{\eta^1,\ldots,\eta^7\}$ of $\frg^*$ on which $\varphi$ takes the standard form,
\[
\varphi=\eta^{127}+\eta^{347}+\eta^{567}+\eta^{135}-\eta^{236}-\eta^{146}-\eta^{245}\,.
\]

With this background closed $\G_2$-structure $\varphi$ we can compute the dimension of the stabilizer $\Aut(\frg)_\varphi$ for each of the pairs $(\frg,\varphi)$; as we explained before, to do this, we compute the dimension of its Lie algebra, $\Der(\frg)_\varphi$. The results are contained in the following table (see Appendix~\ref{appendix:1} for the explicit closed $\G_2$-form we use and for an overview of the computations).


	
	

\begin{table}[H]
\centering
\caption{Nilpotent Lie algebras with closed $\G_2$-structures}\label{Table:1}
\vspace{0.25 cm}
{\tabulinesep=1.2mm
\begin{tabu}{c|c|c|c|c|}
\toprule[1.5pt]
\multirow{2}{*}{$\frg$} &  & $\dim $ & $\dim$ & $\dim$\\
 &  & $V^{\tr{c}}(\frg)$ & $\Aut(\frg)$ & $\Aut(\frg)_\varphi$\\
\specialrule{1pt}{0pt}{0pt}
$\frn_1$ & $(0,0,0,0,0,0,0)$ & 35 & 49 & 14\\
\specialrule{1pt}{0pt}{0pt}
$\frn_4$ & $(0,0,0,0,12,13,0)$ & 27 & 27 & 1\\
\specialrule{1pt}{0pt}{0pt}
$\frn_{10}$ & $(0,0,0,12,13,23,0)$ & 27 & 25 & 1\\
\specialrule{1pt}{0pt}{0pt}
$147E1(2)$ & $\begin{array}{c}
            \left(0,0,0,12,23,-13,\right.\\
            \left.2\cdot(-16+25+26-34)\right)\end{array}$ & 20 & 15 & 1\\
\specialrule{1pt}{0pt}{0pt}
$247A$ & $(0,0,0,12,13,14,15)$ & 23 & 19 & 0\\
\specialrule{1pt}{0pt}{0pt}
$247L$ & $(0,0,0,12,13,14+23,15)$ & 23 & 17 & 0\\
\specialrule{1pt}{0pt}{0pt}
$257A$ & $(0,0,12,0,0,13+24,15)$ & 23 & 19 & 0\\
\specialrule{1pt}{0pt}{0pt}
$257B$ & $(0,0,12,0,0,13,14+25)$ & 23 & 18 & 0\\
\specialrule{1.25pt}{0pt}{0pt}
$1357N(1)$ & $(0,0,12,0,13+24,14,15+23+34+46)$ & 20 & 13 & 0\\
\specialrule{1pt}{0pt}{0pt}
$1357S(-3)$ & $\begin{array}{c}
            \left(0,0,12,0,13,23+24,\right.\\
            \left.15+16+25-3\cdot 26+34\right)\end{array}$ & 19 & 12 & 0\\
\specialrule{1pt}{0pt}{0pt}
$12457H$ & $(0,0,12,13,23,15+24,16+34)$ & 19 & 12 & 0\\
\specialrule{1pt}{0pt}{0pt}
$12457I$ & $(0,0,12,13,23,15+24,16+25+34)$ & 19 & 11 & 0\\
\bottomrule[1pt]
\end{tabu}}
\end{table}


\begin{remark}
Throughout the paper we use the so-called Salamon's notation for Lie algebras. For instance, $(13,-23,0)$ denotes the 3-dimensional Lie algebra whose dual space has a basis $\{e^1,e^2,e^3\}$ with $\d e^1=e^{13}$, $\d e^2=-e^{23}$ and $\d e^3=0$. Here $\d$ denotes the Chevalley-Eilenberg differential, which is the dual map to the Lie bracket.
\end{remark}

\begin{remark}
If $\frg=\frn_1$, the abelian Lie algebra, the automorphism group acts transitively on the space of (closed) 3-forms; the stabilizer of a point is isomorphic to $\G_2$, and the moduli space is just one point.    
\end{remark}

\begin{remark}
Notice that, according to \eqref{eq:dimension_inv}, the dimension of $V^{\textrm{c}}(\frg)$ in Table \ref{Table:1} coincides with the dimension of $\mathcal{M}^{\textrm{c}}_{\textrm{inv}}(M)$, where $M$ is any nilmanifold quotient of $G$. 
\end{remark}


As we see from Table~\ref{Table:1}, for most of the pairs $(\frg,\varphi)$ one has $\dim\Aut(\frg)_\varphi=0$, the stabilizer is minimal, and the orbit $\calO(\varphi)$ has maximal dimension. Hence we can immediately compute the dimension of the corresponding moduli space. This is not the case, however, for $\frg\in\{\frn_4,\frn_{10},147E1(2)\}$. To determine whether the dimension of the given orbit is maximal in each case, the strategy is to slightly perturb the given $\varphi$ to a closed $\G_2$ form $\varphi_\varepsilon$, for $\varepsilon$ sufficiently small, whose stabilizer has dimension 0.
	
\begin{itemize}
	\item In the Lie algebra $\frn_4$, the orbit of the given $\varphi$ seems to be minimal.
	\item In the Lie algebra $\frn_{10}$ the orbit of the given $\varphi$ has 1-dimensional stabilizer. Considering the closed $\G_2$-structure 
    $\varphi_\varepsilon=\varphi+\varepsilon(e^{257}+e^{347})$, one can check that $\dim\Aut(\frn_{10})_{\varphi_\varepsilon}=0$.
	\item In the Lie algebra $\frg=147E1(2)$ the orbit of the given $\varphi$ has 1-dimensional stabilizer. Considering the closed $\G_2$-structure $\varphi_\varepsilon=\varphi+\varepsilon(e^{156}-e^{237}+e^{346})$, one sees that $\dim\Aut(\frg)_{\varphi_\varepsilon}=0$.
\end{itemize}

Summing up, we can compute the dimension of $\calM^{\mathrm{c}}(\frg)$ for most of the 7-dimensional nilpotent Lie algebras with a closed $\G_2$ structure. We also write down the dimension of $\calM^{\tr{c}}_{\mathrm{inv}}(M)$, where $M$ is any nilmanifold quotient of $G$, the only connected, simply connected Lie group with Lie algebra $\frg$; this is simply $\dim V^{\tr{c}}(\frg)$. We collect the results in the following table, together with the third Betti number of each Lie algebra:

\begin{table}[H]
\centering
\caption{}\label{Table:2}
\vspace{0.25 cm}
{\tabulinesep=1.2mm
\begin{tabu}{c|c|c||c|c|c|}
\toprule[1.5pt]
$\frg$ & $\dim\calM^{\mathrm{c}}(\frg)$ & $b_3(\frg)$ & $\frg$ & $\dim\calM^{\mathrm{c}}(\frg)$ & $b_3(\frg)$\\
\specialrule{1pt}{0pt}{0pt}
$\frn_1$ & 0 & 35 & $257A$ & 4 & 14 \\
\specialrule{1pt}{0pt}{0pt}
$\frn_4$ & $\leq 1$ & 21 & $257B$ & 5 & 13  \\
\specialrule{1pt}{0pt}{0pt}
$\frn_{10}$ & 3 & 20 & $1357N(1)$ & 7 & 8 \\
\specialrule{1.25pt}{0pt}{0pt}
$147E1(2)$ & 6 & 10 & $1357S(-3)$ & 7 & 8  \\
\specialrule{1pt}{0pt}{0pt}
$247A$ & 4 & 13 & $12457H$ & 7 & 6\\
\specialrule{1pt}{0pt}{0pt}
$247L$ & 6 & 13 & $12457I$ & 8 & 6\\
\bottomrule[1pt]
\end{tabu}}
\end{table} 

Unfortunately, we have not been able to compute $\dim\calM^{\mathrm{c}}(\frn_4)$. By looking at Table~\ref{Table:2} we deduce the following theorem:
\begin{theorem}
On 7-dimensional non-abelian nilpotent Lie algebras with a closed $\G_2$-structure there is no relation between the dimension of $\calM^{\mathrm{c}}(\frg)$ and $b_3(\frg)$. More precisely:
\begin{itemize}
    \item $\dim\calM^{\mathrm{c}}(\frg)<b_3(\frg)$ if $\frg$ is decomposable;
    \item both $\dim\calM^{\mathrm{c}}(\frg)<b_3(\frg)$ and $\dim\calM^{\mathrm{c}}(\frg)>b_3(\frg)$ are possible on indecomposable Lie algebras.
\end{itemize}
\end{theorem}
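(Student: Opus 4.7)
The plan is to derive the theorem as a direct consequence of the dimension counts for each of the twelve Lie algebras in Conti and Fernández's classification, which have already been aggregated in Table~\ref{Table:1}. Since the statement excludes the abelian case $\frn_1$, we focus on the remaining eleven Lie algebras, of which two ($\frn_4$ and $\frn_{10}$) are decomposable and nine are indecomposable. For each $\frg$, the dimension of $\calM^{\mathrm{c}}(\frg)$ is obtained from the formula \eqref{eq:dimension}, namely $\dim V^{\mathrm{c}}(\frg) - \dim\Aut(\frg) + \dim\Aut(\frg)_\varphi$, with $\varphi$ taken on a principal orbit. The first three ingredients are computed with \texttt{SageMath} starting from the standard background $\G_2$-form read off the coframe in \cite{CF11} and are tabulated in Table~\ref{Table:1}; the Betti numbers $b_3(\frg)$ follow from a routine Chevalley--Eilenberg computation, which coincides with $b_3(\Gamma\backslash G)$ by Nomizu's theorem.

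The main technical point is ensuring that the chosen background $\varphi$ sits on a principal orbit, so that its stabilizer realizes the minimal dimension. Whenever Table~\ref{Table:1} gives $\dim\Aut(\frg)_\varphi=0$, this is automatic, since the stabilizer dimension cannot drop below zero. For $\frn_{10}$ and $147E1(2)$, the stabilizer of the background $\varphi$ is one-dimensional, so we introduce small closed perturbations $\varphi_\varepsilon$ of the types displayed in the bullet list following Table~\ref{Table:1}, and verify that $\dim\Aut(\frg)_{\varphi_\varepsilon}=0$ for small $\varepsilon\neq 0$; this certifies that the orbit of $\varphi_\varepsilon$ is principal and lowers the dimension count by one. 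The sole unresolved case is $\frn_4$, where no such perturbation is available; there we retain only the upper bound $\dim\calM^{\mathrm{c}}(\frn_4)\leq 1$, which still suffices for the comparison we need.

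With these dimensions in hand, assembled in Table~\ref{Table:2}, the theorem is proved by direct inspection. For the two non-abelian decomposable cases we have $\dim\calM^{\mathrm{c}}(\frn_4)\leq 1<21=b_3(\frn_4)$ and $\dim\calM^{\mathrm{c}}(\frn_{10})=3<20=b_3(\frn_{10})$, which establishes the first bullet. For the indecomposable cases, most rows satisfy $\dim\calM^{\mathrm{c}}(\frg)<b_3(\frg)$ (for instance $4<13$ in $247A$, or $7<8$ in $1357N(1)$ and $1357S(-3)$), whereas the rows corresponding to $12457H$ and $12457I$ exhibit the reverse inequality, with $\dim\calM^{\mathrm{c}}(\frg)=7$ and $8$ respectively against $b_3(\frg)=6$, giving the second bullet.

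The main obstacle is organizing and certifying the symbolic computations underlying Table~\ref{Table:1}, in particular the dimension of $\Der(\frg)_\varphi$ both for the original $\varphi$ and for the perturbations $\varphi_\varepsilon$, where a careless choice of coframe or perturbation could mask a non-principal orbit. Beyond this computational step, the theorem is essentially a combinatorial reading of the resulting table of integers and requires no further structural input.
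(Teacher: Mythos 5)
Your proposal is correct and follows essentially the same route as the paper, whose ``proof'' is precisely the inspection of Table~\ref{Table:2} assembled from the dimension counts of Table~\ref{Table:1}, the perturbation analysis for $\frn_{10}$ and $147E1(2)$, and the upper bound for $\frn_4$; your identification of the non-abelian decomposable cases ($\frn_4$, $\frn_{10}$) versus the nine indecomposable ones and the witnessing inequalities ($12457H$, $12457I$ for the reverse direction) matches the paper. One small internal wrinkle: you assert that the perturbations ``lower the dimension count by one'' (which would give $\dim\calM^{\mathrm{c}}(\frn_{10})=2$ and $\dim\calM^{\mathrm{c}}(147E1(2))=5$) yet then quote the values $3$ and $6$ from Table~\ref{Table:2}; either reading leaves every inequality in the theorem intact, so the conclusion is unaffected.
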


\begin{corollary}
On a 7-dimensional nilmanifold $M=\Gamma\backslash G$ endowed with left-invariant closed $\G_2$-structures there is no relation between the dimension of $\calM^{\mathrm{c}}(\frg)$ and $b_3(M)$.    
\end{corollary}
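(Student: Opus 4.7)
My plan is to derive the corollary as an immediate consequence of the preceding theorem, the only extra ingredient being Nomizu's identification of the cohomology of a nilmanifold with the Chevalley-Eilenberg cohomology of its Lie algebra. Since $M=\Gamma\backslash G$ is a nilmanifold, Nomizu's theorem yields $b_3(M)=b_3(\frg)$, so the statement of the theorem transfers verbatim with $b_3(\frg)$ replaced by $b_3(M)$. Note that this step also justifies the slight abuse in the corollary's phrasing, since $\calM^{\mathrm{c}}(\frg)$ is an invariant of $\frg$ alone and does not depend on the choice of lattice.

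To make this concrete, I would exhibit two witnesses from Table~\ref{Table:2}: for instance $\frn_{10}$, which satisfies $\dim\calM^{\mathrm{c}}(\frn_{10})=3<20=b_3(\frn_{10})=b_3(M)$, and $12457I$, which satisfies $\dim\calM^{\mathrm{c}}(12457I)=8>6=b_3(12457I)=b_3(M)$. These two witnesses cover both possible strict inequalities and thereby establish that no relation (equality or systematic inequality) can hold in general.

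To ensure these are genuine nilmanifolds, I would invoke Malcev's criterion: the structure constants displayed in Table~\ref{Table:1} are rational (indeed integer) in the chosen basis, so each of the corresponding simply connected nilpotent Lie groups admits a cocompact lattice. This is the only point that needs a separate verification beyond the theorem, but it is completely routine. Hence the proof presents essentially no obstacle: the corollary is a direct packaging of the theorem through Nomizu's isomorphism, together with the standard existence of lattices for nilpotent Lie algebras with rational structure constants.
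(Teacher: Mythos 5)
Your proposal is correct and matches the paper's (implicit) argument: the corollary is stated without proof precisely because it follows from the preceding theorem together with Nomizu's isomorphism $b_3(M)=b_3(\frg)$, which the paper has already invoked in \eqref{eq:dimension_inv}. Your two witnesses $\frn_{10}$ and $12457I$ are read off correctly from Table~\ref{Table:2}, and the appeal to Malcev's criterion for the existence of lattices is a sound (if routine) addition that the paper leaves tacit.
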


\begin{remark}
    For nilpotent Lie algebras equipped with closed $\G_2$-structures, there is a relation between the dimension of the moduli space $\calM^{\mathrm{c}}(\frg)$ and the nilpotency step of $\frg$. Indeed, $\dim\calM^{\mathrm{c}}(\frg)<b_3(\frg)$ if $\frg$ has nilpotency step $\leq4$ and $\dim\calM^{\mathrm{c}}(\frg)>b_3(\frg)$ for $\frg$ 5-step nilpotent.
\end{remark}

\begin{remark}
    Although there is no classification of closed $\G_2$-structures on 7-dimensional solvable Lie algebras, the same kind of computations can be carried out in the solvable setting. As an example, consider the pair $(\frg,\varphi)$ from \cite{Fer87}, where 
    \begin{itemize}
        \item $\frg=(0,0,13-24,14+23,-15+26,-16-25,0)$ is a 2-step solvable Lie algebra, and
        \item $\varphi=e^{134}+e^{467}+e^{156}-e^{127}-e^{357}+e^{245}+e^{236}$ is a closed $\G_2$-structure on $\frg$.
    \end{itemize} In this case we have $\dim V^{\mathrm{c}}(\frg)=19$, $\dim\Aut(\frg)=11$ and $\dim\Aut(\frg)_\varphi=1$. If we consider the perturbation $\varphi_\varepsilon=\varphi+\varepsilon e^{123}$ we have $\dim\Aut(\frg)_{\varphi_\varepsilon}=0$. Hence $\dim\calM^{\mathrm{c}}(\frg)=8>7=b_3(\frg)$. 
\end{remark}

\section{Moduli spaces of coclosed $\G_2$-structures on NLAs}\label{Sec:4}

In this section we focus on moduli spaces of coclosed $\G_2$-structures, using the same approach as in Section~\ref{Sec:3}. We consider three moduli spaces of coclosed $\G_2$-structures:
\begin{itemize}
    \item coclosed $\G_2$-structures on (nilpotent) Lie algebras,
    \item left-invariant coclosed $\G_2$-structures on nilmanifolds,
    \item coclosed $\G_2$-structures on 7-manifolds.
\end{itemize}

Given a 7-dimensional NLA $\frg$ with a coclosed $\G_2$-structure, we set
\[
V^{\tr{cc}}(\frg)\coloneqq \left\{\psi\in\Lambda^4\frg^*\mid \text{$\psi$ determines a $\G_2$-structure and } \d\psi=0\right\}\,.
\]
Observe that starting with a 3-form $\varphi$ defining a $\G_2$-structure, one has $\psi=\ast_\varphi\varphi$. The stabilizer of such 4-form is $\pm\G_2$; therefore, the 4-form $\psi$ alone does determine the $\G_2$-metric, but not the orientation. An explicit formula, which gives the metric directly from $\psi$, can be found in \cite[Page~5]{FMMR23}. Notice that $V^{\tr{cc}}(\frg)$ is an open subset of $Z^4(\frg^*)$, hence its dimension coincides with that of $Z^4(\frg^*)$. $\Aut(\frg)$ acts naturally on $V^{\tr{cc}}(\frg)$. We denote by $\calO(\psi)\cong \Aut(\frg)/\Aut(\frg)_\psi$ the orbit of $\psi\in V^{\tr{cc}}(\frg)$ under the $\Aut(\frg)$-action; here $\Aut(\frg)_\psi$ is the stabilizer of $\psi$; notice that $\Aut(\frg)_\psi$ is compact.

\begin{definition}
The {\em moduli space of coclosed $\G_2$-structures on $\frg$} is
\[
 \calM^{\tr{cc}}(\frg)\coloneqq V^{\tr{cc}}(\frg)/\Aut(\frg)=\{\calO(\psi)\mid\psi\in V^{\tr{cc}}(\frg)\}\,.
\]
\end{definition}
As in the closed case, it is a stratified space, since the orbits may have different dimensions. 

\begin{definition}
The {\em dimension} of $\calM^{\tr{cc}}(\frg)$ is 
\begin{equation}\label{eq:dimension:coc}
\dim\calM^{\tr{cc}}(\frg)\coloneqq\dim V^{\tr{cc}}(\frg)-\dim\calO(\psi)=\dim V^{\tr{cc}}(\frg)-(\dim\Aut(\frg)-\dim\Aut(\frg)_\psi)\,,    
\end{equation}
where $\calO(\psi)$ is a principal orbit; equivalently, $\psi$ has minimal stabilizer.    
\end{definition}

If $\Gamma\backslash G$ is a nilmanifold, the {\em moduli space of left-invariant coclosed $\G_2$-structures} on $\Gamma\backslash G$ is 
\[
\calM^{\tr{cc}}_{\mathrm{inv}}(\Gamma\backslash G)\coloneqq V^{\tr{cc}}_{\mathrm{inv}}(\Gamma\backslash G)/\Aut(G,\Gamma)=\{\calO(\psi_{\mathrm{inv}})\mid\psi_{\mathrm{inv}}\in V^{\tr{cc}}_{\mathrm{inv}}(\Gamma\backslash G)\}\,,
\]
where $V^{\tr{cc}}_{\mathrm{inv}}(\Gamma\backslash G)$ is the space of left-invariant coclosed $\G_2$-structures on $\Gamma\backslash G$, thus $V^{\tr{cc}}_{\mathrm{inv}}(\Gamma\backslash G)\cong V^{\tr{cc}}(\frg)$. Arguing exactly as in Section~\ref{Sec:3}, we see that on a nilmanifold $\Gamma\backslash G$ equipped with a left-invariant coclosed $\G_2$-structure we have 
\[
\dim\calM^{\tr{cc}}_{\mathrm{inv}}(\Gamma\backslash G)=\dim V^{\tr{cc}}(\frg)=\dim\left(\ker\left(\d\colon\Lambda^4\frg^*\to\Lambda^5\frg^*\right)\right)\geq b_4(\frg)=b_3(\frg)=b_3(\Gamma\backslash G)\,.
\]

\begin{definition}
Let $M$ be a manifold endowed with coclosed $\G_2$-structures. The {\em moduli space of coclosed $\G_2$-structures on $M$} is
\[
\calM^{\tr{cc}}(M)\coloneqq\{\psi\in\Omega^4_+(M), \d\psi=0\}/\Diff_0(M)=\{\calO(\psi) \mid \psi\in\Omega^4_+(M), \d\psi=0\}\,,
\]
where $\calO(\psi)$ is the orbit of $\psi$ under the action of $\Diff_0(M)$, so that $\calO(\psi)\cong\Diff_0(M)/\Diff_0(M)_{\psi}$.
\end{definition}

We also have:

\begin{proposition}
Let $M$ be a compact manifold endowed with coclosed $\G_2$-structures. Then $\calM^{\tr{cc}}(M)$ is infinite-dimensional.
\end{proposition}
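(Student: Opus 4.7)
The plan is to mirror, almost verbatim, the argument given for the closed case, replacing the 3-form $\varphi$ by the 4-form $\psi=\ast_\varphi\varphi$ and using the type decomposition of 3-forms (rather than 2-forms) under the natural $\G_2$-action. First I would note that $\{\psi\in\Omega^4_+(M),\,\d\psi=0\}$ is an open subset of the infinite-dimensional vector space $Z^4(M)$ of closed 4-forms, acted on by $\Diff_0(M)$. For a given class $[\psi]\in\calM^{\tr{cc}}(M)$, the (formal) tangent space may then be written as
\[
T_{[\psi]}\calM^{\tr{cc}}(M)=Z^4(M)\big/T_\psi(\Diff_0(M)\cdot\psi),
\]
where $T_\psi(\Diff_0(M)\cdot\psi)=\{\mathcal L_X\psi\mid X\in\frX(M)\}$.

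Next I would exploit $\d\psi=0$ to rewrite, via Cartan's formula, $\mathcal L_X\psi=\d(\iota_X\psi)$, so that the tangent space to the orbit equals $\d(\{\iota_X\psi\mid X\in\frX(M)\})$. The key point is to identify the space $\{\iota_X\psi\mid X\in\frX(M)\}\subset\Omega^3(M)$ inside the $\G_2$-type decomposition $\Omega^3(M)=\Omega^3_1(M)\oplus\Omega^3_7(M)\oplus\Omega^3_{27}(M)$. The map $X\mapsto\iota_X\psi$ is well-known to be a pointwise isomorphism between $TM$ and $\Lambda^3_7 T^*M$, hence an isomorphism $\frX(M)\cong\Omega^3_7(M)$. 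Therefore $T_\psi(\Diff_0(M)\cdot\psi)=\d\!\left(\Omega^3_7(M)\right)$.

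To conclude I would invoke Hodge theory on the compact manifold $M$: one has the orthogonal decomposition $Z^4(M)=\mathcal H^4(M)\oplus\d(\Omega^3(M))$, which combined with the splitting above yields
\[
T_{[\psi]}\calM^{\tr{cc}}(M)\;\cong\;\mathcal H^4(M)\oplus\d\!\left(\Omega^3_1(M)\oplus\Omega^3_{27}(M)\right).
\]
Since $\Omega^3_1(M)$ and $\Omega^3_{27}(M)$ are infinite-dimensional and $\d$ restricted to their orthogonal complement of the harmonic forms is injective with infinite-dimensional image, the right-hand side is infinite-dimensional, and the proposition follows.

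The main obstacle, and the one step that is not a formal transcription of the closed-case proof, is the identification of the tangent directions to the diffeomorphism orbit with $\d(\Omega^3_7(M))$; this rests on the $\G_2$-representation fact that contraction with $\psi$ realizes the isomorphism $TM\cong\Lambda^3_7 T^*M$. Once this is in place, the Hodge-theoretic conclusion is immediate, and the remaining care is merely to ensure that the direct sum with the $\Omega^3_1\oplus\Omega^3_{27}$ components indeed survives after applying $\d$ modulo harmonic forms, which follows from the standard $L^2$-orthogonal Hodge decomposition.
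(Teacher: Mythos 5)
Your argument is correct and coincides with the paper's own proof: both identify the tangent space to the $\Diff_0(M)$-orbit with $\d\bigl(\Omega^3_7(M)\bigr)$ via Cartan's formula and the pointwise isomorphism $X\mapsto\iota_X\psi$ between $TM$ and $\Lambda^3_7T^*M$, and then apply the Hodge decomposition $Z^4(M)=\mathcal H^4(M)\oplus\d\bigl(\Omega^3(M)\bigr)$ to exhibit the infinite-dimensional complement $\d\bigl(\Omega^3_1(M)\oplus\Omega^3_{27}(M)\bigr)$. The only cosmetic difference is that the paper writes $\langle\varphi\rangle$ for what you call $\Omega^3_1(M)$.
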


\begin{proof}
 $\{\psi\in\Omega^4_+(M), \d\psi=0\}$ is an open set in the infinite-dimensional vector space $Z^4(M)$ of closed 4-forms on $M$, acted on by $\Diff_0(M)$. Pick $[\psi]\in\calM^{\tr{cc}}(M)$. Then
 \[
 T_{[\psi]}\calM^{\tr{cc}}(M)=Z^4(M)/T_\psi(\Diff_0(M)\cdot\psi)\,,
 \]
 where $T_\psi(\Diff_0(M)\cdot\psi)=\{\mathcal L_X\psi \mid X\in\frX(M)\}$. Since $\d\psi=0$,
 \[
 T_\psi(\Diff_0(M)\cdot\psi)=\{\d(\iota_X\psi) \mid X\in\frX(M)\}=\d\left(\Omega^3_7(M)\right)\,,
 \]
 where $\Omega^3(M)=\langle\varphi\rangle\oplus\Omega^3_7(M)\oplus\Omega^3_{27}(M)$ under the natural action of $\G_2$. By harmonic theory, $Z^4(M)=\mathcal{H}^4(M)\oplus \d\left(\Omega^3(M)\right)$, hence $T_{[\psi]}\calM^{\tr{cc}}(M)\cong \mathcal{H}^4(M)\oplus \d\left(\langle \varphi\rangle\oplus\Omega^3_{27}(M)\right)$.
\end{proof}

\begin{remark}
Not much seems to be known about the group $\Diff_0(M)_\psi$, except that it is non-trivial, in general. Below, we will show that it needs not be abelian.
\end{remark}


If $M=\Gamma\backslash G$ is a nilmanifold admitting left-invariant coclosed $\G_2$-structures, we have a natural inclusion $\imath\colon V^{\tr{cc}}(\frg)\hookrightarrow\{\psi\in\Omega^4_+(M) \mid \d\psi=0\}$. Arguing as in Theorem~\ref{thm:closed}, we obtain:

\begin{theorem}\label{thm:coclosed}
    $\imath$ induces an inclusion $\bar{\imath}\colon \calM^{\tr{cc}}(\frg)\hookrightarrow\calM^{\tr{cc}}(M)$.
\end{theorem}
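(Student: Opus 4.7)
The plan is to mirror the proof of Theorem~\ref{thm:closed} verbatim, with $\varphi$ replaced by $\psi$ and the closedness condition replaced by the coclosedness condition. The only new ingredient to verify is that a coclosed positive 4-form still determines a left-invariant Riemannian metric on $G$, so that Wilson's theorem can still be invoked as the key tool.

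First I would pick $\psi_1,\psi_2\in V^{\tr{cc}}(\frg)$ and suppose that their images $\imath(\psi_1)$, $\imath(\psi_2)$ represent the same class in $\calM^{\tr{cc}}(M)$, so that there exists $f\in\Diff_0(M)$ with $f^*\imath(\psi_2)=\imath(\psi_1)$. To reach the conclusion $[\psi_1]=[\psi_2]\in\calM^{\tr{cc}}(\frg)$, I need to produce an element of $\Aut(\frg)$ carrying $\psi_1$ to $\psi_2$.

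Using the universal cover $\pi\colon G\to M=\Gamma\backslash G$, lift $f$ to a diffeomorphism $\tilde{f}\colon G\to G$. If $\tilde{\psi}_j\in\Lambda^4\frg^*$ denotes the left-invariant 4-form on $G$ that projects onto $\imath(\psi_j)$ for $j=1,2$, then by naturality $\tilde{f}^*\tilde{\psi}_2=\tilde{\psi}_1$. Now I use the fact recalled in the paragraph introducing $V^{\tr{cc}}(\frg)$ (and specifically the explicit formula of \cite[Page~5]{FMMR23}): although the stabilizer of a positive 4-form is $\pm\G_2$, so that the orientation is determined only up to sign, the $\G_2$-metric itself \emph{is} recovered directly from $\psi$. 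Consequently each $\tilde{\psi}_j$ induces a left-invariant Riemannian metric $\tilde{g}_j$ on $G$, and the identity $\tilde{f}^*\tilde{\psi}_2=\tilde{\psi}_1$ forces $\tilde{f}\colon(G,\tilde{g}_1)\to(G,\tilde{g}_2)$ to be an isometry.

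Finally, composing $\tilde{f}$ with a left translation if necessary, I may assume $\tilde{f}$ fixes the identity of $G$. Wilson's theorem, cited in the proof of Theorem~\ref{thm:closed}, then yields $\tilde{f}\in\Aut(G)\cong\Aut(\frg)$, and this automorphism carries $\psi_1$ to $\psi_2$, so $[\psi_1]=[\psi_2]\in\calM^{\tr{cc}}(\frg)$. Because the argument is parallel to the closed case, no real obstacle appears; the one point that warrants care is the orientation ambiguity for 4-forms, but this is harmless here since Wilson's theorem only needs $\tilde{f}$ to be an isometry, not an orientation-preserving isometry.
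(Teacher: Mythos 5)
Your argument is correct and is exactly what the paper intends: the paper's ``proof'' of Theorem~\ref{thm:coclosed} consists solely of the remark ``Arguing as in Theorem~\ref{thm:closed}, we obtain,'' and your write-up carries out that adaptation, including the one point that genuinely needs checking, namely that the positive $4$-form $\psi$ still determines the metric (stabilizer $\pm\G_2$) so that Wilson's theorem applies. Nothing is missing.
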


As we did in the previous section, we can ask whether, on a nilmanifold $\Gamma\backslash G$ admitting coclosed $\G_2$-structures, there is any relationship between $\dim \calM^{\tr{cc}}(\frg)$ and $b_4(\Gamma\backslash G)=b_3(\Gamma\backslash G)=b_3(\frg)$. We address this question next.


By \eqref{eq:dimension:coc} to compute $\dim \calM^{\tr{cc}}(\frg)$ we need the dimensions of $ V^{\tr{cc}}(\frg)$, $\Aut(\frg)$ and $\Aut(\frg)_\psi$.

\begin{itemize}
    \item We noticed above that $\dim V^{\tr{cc}}(\frg)=\dim Z^4(\frg^*)$;
    \item $\dim\Aut(\frg)$ is computed by the dimension of it Lie algebra $\Der(\frg)=\{D\in\mathfrak{gl}(\frg) \mid D[X,Y]=[DX,Y]+[X,DY] \ \forall~X,Y\in\frg\}$;
    \item $\dim\Aut(\frg)_\psi$ is computed by the dimension of its Lie algebra $\Der(\frg)_\psi=\{D\in\Der(\frg) \mid D\cdot\psi=0\}$, where, for $X,Y,Z,W\in\frg$,
\begin{align*}
(D\cdot \psi)(X,Y,Z,W)=&-\psi(DX,Y,Z,W)-\psi(X,DY,Z,W)\\
&-\psi(X,Y,DZ,W)-\psi(X,Y,Z,DW)\,.
\end{align*}
\end{itemize}

To compute the dimension of $\Aut(\frg)_\psi$ we need a background coclosed $\G_2$-structure on each of these Lie algebras. 

In \cite{BFF18}, Bagaglini, Fino and Fernández classified the 7-dimensional decomposable and 2-step indecomposable nilpotent Lie algebras admitting a coclosed $\G_2$-structure. According to \cite{Gong}, there are 35 decomposable and 9 indecomposable 2-step 7-dimensional NLAs and, as shown in \cite{BFF18}, only 24 of the decomposable and 7 of the 2-step admit a coclosed $\G_2$-structures.

For an explicit coclosed $\G_2$-structure we look at \cite{BGM21}, where the authors construct a (purely) coclosed $\G_2$-structure on a 7-dimensional Lie algebra $\frg$ starting from a particular kind of $\mathrm{SU}(3)$-structure $(\omega,\psi_-)$ on a codimension 1 subspace and a covector $\eta\in\frg^*$ which pairs non-trivially with the center of $\frg$, see \cite[Theorem~4.1]{BGM21}. With these ingredients, the $\G_2$ 4-form on $\frg$ is
\[
\psi=\frac{1}{2}\omega^2+\psi_-\wedge\eta\,.
\]

\begin{remark}
For the Lie algebra $\frn_2$ we used the coclosed $\G_2$-form $\psi$ constructed in \cite{BFF18}. The reason for this is that the ones constructed in \cite{BGM21} are {\em purely} coclosed $\G_2$-structures, namely coclosed $\G_2$-structures for which $\varphi\wedge \d\varphi=0$ (here $\psi=*_\varphi\varphi$). However, combining \cite[Theorem~4.3, Theorem~5.1]{BFF18} and \cite[Theorem~1.4]{dBMR22} one sees that $\frn_2$ is the only 7-dimensional 2-step nilpotent Lie algebra which admits coclosed $\G_2$-structures but no purely coclosed $\G_2$-structures. 
\end{remark}

With this background coclosed $\G_2$-structure $\psi$ we can compute the dimension of the stabilizer $\Aut(\frg)_\psi$, for each of the pairs $(\frg,\psi)$. The results are contained in the following tables (see Appendix~\ref{appendix:2} for the explicit coclosed $\G_2$-form we use and for an overview of the computations).

\begin{table}[H]
\centering
\caption{Decomposable nilpotent Lie algebras with coclosed $\G_2$-structures - I}\label{Table:3_1}
\vspace{0.25 cm}
{\tabulinesep=1.2mm
\begin{tabu}{c|c|c|c|c|c|}
\toprule[1.5pt]
\multirow{2}{*}{$\frg$} & nilpotency & \multirow{2}{*}{structure equations} & $\dim $ & $\dim$ & $\dim$\\
 & step & & $V^{\tr{cc}}(\frg)$ & $\Aut(\frg)$ & $\Aut(\frg)_\psi$\\
\specialrule{1pt}{0pt}{0pt}
$\frn_1 $ & 1 & $(0,0,0,0,0,0,0)$ & 35 & 49 & 14\\
\specialrule{1pt}{0pt}{0pt}
$\frn_2$ & 2 & $(0,0,0,0,0,12,0)$ & 31 & 34 & 4\\
\specialrule{1pt}{0pt}{0pt}
$\frn_3$ & 2 & $(0,0,0,0,0,12+34,0)$ & 29 & 29 & 2\\
\specialrule{1pt}{0pt}{0pt}
$\frn_4$ & 2 & $(0,0,0,0,12,13,0)$ & 29 & 27 & 0\\
\specialrule{1pt}{0pt}{0pt}
$\frn_5$ & 2 & $(0,0,0,0,12,34,0)$ & 28 & 23 & 1\\
\specialrule{1pt}{0pt}{0pt}
$\frn_6$ & 2 & $(0,0,0,0,13-24,14+23,0)$ & 28 & 23 & 0\\
\specialrule{1pt}{0pt}{0pt}
$\frn_7$ & 2 & $(0,0,0,0,12,14+23,0)$ & 28 & 24 & 0\\
\specialrule{1pt}{0pt}{0pt}
$\frn_8$ & 3 &$(0,0,0,0,12,14+25,0)$ & 27 & 22 & 0\\
\specialrule{1pt}{0pt}{0pt}
$\frn_9$ & 3 & $(0,0,0,0,12,15+34,0)$ & 26 & 20 & 0\\
\specialrule{1pt}{0pt}{0pt}
$\frn_{10}$ & 2 & $(0,0,0,12,13,23,0)$ & 28 & 25 & 0\\
\specialrule{1pt}{0pt}{0pt}
$\frn_{11}$ & 3 & $(0,0,0,12,13,24,0)$ & 26 & 19 & 0\\


\bottomrule[1pt]
\end{tabu}}
\end{table}

\begin{table}[H]
\centering
\caption{Decomposable nilpotent Lie algebras with coclosed $\G_2$-structures - II}\label{Table:3_2}
\vspace{0.25 cm}
{\tabulinesep=1.2mm
\begin{tabu}{c|c|c|c|c|c|}
\toprule[1.5pt]
\multirow{2}{*}{$\frg$} & nilpotency & \multirow{2}{*}{structure equations} & $\dim $ & $\dim$ & $\dim$\\
 & step & & $V^{\tr{cc}}(\frg)$ & $\Aut(\frg)$ & $\Aut(\frg)_\psi$\\
\specialrule{1pt}{0pt}{0pt}
$\frn_{12}$ & 3 & $(0,0,0,12,13,14+23,0)$ & 26 & 20 & 0\\
\specialrule{1pt}{0pt}{0pt}
$\frn_{13}$ & 3 & $(0,0,0,12,23,14+35,0)$ & 25 & 16 & 0\\
\specialrule{1pt}{0pt}{0pt}
$\frn_{14}$ & 3 & $(0,0,0,12,23,14-35,0)$ & 25 & 16 & 0\\
\specialrule{1pt}{0pt}{0pt}
$\frn_{15}$ & 3 & $(0,0,0,12,13,14+35,0)$ & 25 & 17 & 0\\
\specialrule{1pt}{0pt}{0pt}
$\frn_{16}$ & 4 & $(0,0,0,12,14,15,0)$ & 25 & 19 & 0\\
\specialrule{1pt}{0pt}{0pt}
$\frn_{17}$ & 4 & $(0,0,0,12,14,15+24,0)$ & 25 & 18 & 0\\
\specialrule{1pt}{0pt}{0pt}
$\frn_{18}$ & 4 & $(0,0,0,12,14,15+24+23,0)$ & 25 & 16 & 0\\
\specialrule{1pt}{0pt}{0pt}
$\frn_{19}$ & 4 & $(0,0,0,12,14,15+23,0)$ & 25 & 17 & 0\\
\specialrule{1pt}{0pt}{0pt}
$\frn_{20}$ & 4 & $(0,0,0,12,14-23,15+34,0)$ & 24 & 15 & 0\\
\specialrule{1pt}{0pt}{0pt}
$\frn_{21}$ & 4 & $(0,0,12,13,23,14+25,0)$ & 24 & 14 & 0\\
\specialrule{1pt}{0pt}{0pt}
$\frn_{22}$ & 4 & $(0,0,12,13,23,14-25,0)$ & 24 & 14 & 0\\
\specialrule{1pt}{0pt}{0pt}
$\frn_{23}$ & 4 & $(0,0,12,13,23,14,0)$ & 24 & 16 & 0\\
\specialrule{1pt}{0pt}{0pt}
$\frn_{24}$ & 5 & $(0,0,12,13,14+23,15+24,0)$ & 23 & 13 & 0\\
\bottomrule[1pt]
\end{tabu}}
\end{table}


\begin{table}[H]
\centering
\caption{Indecomposable 2-step nilpotent Lie algebras with coclosed $\G_2$-structures}\label{Table:4}
\vspace{0.25 cm}
{\tabulinesep=1.2mm
\begin{tabu}{c|c|c|c|c|}
\toprule[1.5pt]
\multirow{2}{*}{$\frg$} & \multirow{2}{*}{structure equations} & $\dim $ & $\dim$ & $\dim$\\
 &  & $V^{\tr{cc}}(\frg)$ & $\Aut(\frg)$ & $\Aut(\frg)_\psi$\\
\specialrule{1.25pt}{0pt}{0pt}
$17$ & $(0,0,0,0,0,0,12+34+56)$ & 29 & 28 & 4\\
\specialrule{1pt}{0pt}{0pt}
$37A$ & $(0,0,0,0,12,23,24)$ & 29 & 25 & 0\\
\specialrule{1pt}{0pt}{0pt}
$37B$ & $(0,0,0,0,12,23,34)$ & 28 & 20 & 0\\
\specialrule{1pt}{0pt}{0pt}
$37B_1$ & $(0,0,0,0,12-34,13+24,14)$ & 28 & 20 & 0\\
\specialrule{1pt}{0pt}{0pt}
$37C$ & $(0,0,0,0,12+34,23,24)$ & 28 & 23 & 0\\
\specialrule{1pt}{0pt}{0pt}
$37D$ & $(0,0,0,0,12+34,13,24)$ & 28 & 19 & 0\\
\specialrule{1pt}{0pt}{0pt}
$37D_1$ & $(0,0,0,0,12-34,13+24,14-23)$ & 28 & 19 & 3\\
\bottomrule[1pt]
\end{tabu}}
\end{table}

\begin{remark}
As it happened in the closed case, if $\frg=\frn_1$, the abelian Lie algebra, the automorphism group acts transitively on the space of (closed) 4-forms; the stabilizer of a point is isomorphic to $\pm\G_2$, and the moduli space is just one point.    
\end{remark}

\begin{remark}
The dimension of $V^{\textrm{cc}}(\frg)$ in Tables \ref{Table:3_1}, \ref{Table:3_2} and \ref{Table:4} coincides with the dimension of $\mathcal{M}^{\textrm{cc}}_{\textrm{inv}}(M)$, where $M$ is any nilmanifold quotient of $G$. 
\end{remark}

As we see from Tables \ref{Table:3_1}, \ref{Table:3_2} and \ref{Table:4}, most of the pairs $(\frg,\psi)$ have $\dim\Aut(\frg)_\psi=0$; therefore, the stabilizer is minimal, the orbit $\calO(\psi)$ has maximal dimension and we can directly compute the dimension of the corresponding moduli space. This is not the case, however, for $\frg\in\{\frn_2,\frn_3,\frn_5,17,37D_1\}$. As in the closed setting, we try to slightly perturb the given $\psi$ to a coclosed $\G_2$ form $\psi_\varepsilon$, for $\varepsilon$ sufficiently small, whose stabilizer has dimension 0.

\begin{itemize}
	\item In the Lie algebras $\frn_2$ and $\frn_3$, the given $\psi$ seems to have minimal stabilizer.
    \item In the Lie algebra $\frn_{5}$ the orbit of the given $\psi$ has 1-dimensional stabilizer. Considering the coclosed $\G_2$ 4-form $\psi_\varepsilon=\psi+\varepsilon(e^{2356}+e^{2457})$, one computes that $\dim\Aut(\frn_5)_{\psi_\varepsilon}=0$.
    \item In the Lie algebra $\frg=17$ the orbit of the given $\psi$ has 4-dimensional stabilizer. Considering the coclosed $\G_2$ 4-form 
    $\psi_\varepsilon=\psi+\varepsilon e^{1234}$, one computes that $\dim\Aut(\frg)_{\psi_\varepsilon}=2$. This dimension seems to be minimal.
	\item In the Lie algebra $\frg=37D_1$ the orbit of the given $\psi$ has 3-dimensional stabilizer. One checks that the coclosed $\G_2$ 4-form $\psi_\varepsilon=\psi+\varepsilon(e^{2367}+e^{3456})$ has $\dim\Aut(\frg)_{\psi_\varepsilon}=0$.
\end{itemize}

All in all, we can compute the dimension of $\calM^{\mathrm{cc}}(\frg)$ for most of the 7-dimensional decomposable and 2-step indecomposable nilpotent Lie algebras endowed with a coclosed $\G_2$ structure. We collect the results in the following tables, together with the third Betti number of each Lie algebra; we remark that since a nilpotent Lie algebra is unimodular, its Lie algebra cohomology satisfies Poincaré duality.

\begin{table}[H]
\centering
\caption{Moduli spaces of coclosed $\G_2$-structures on decomposable Lie algebras}\label{Table:5}
\vspace{0.25 cm}
{\tabulinesep=1.2mm
\begin{tabu}{c|c|c||c|c|c||c|c|c|}
\toprule[1.5pt]
$\frg$ & $\dim\calM^{\mathrm{cc}}(\frg)$ & $b_3(\frg)$ & $\frg$ & $\dim\calM^{\mathrm{cc}}(\frg)$ & $b_3(\frg)$ & $\frg$ & $\dim\calM^{\mathrm{cc}}(\frg)$ & $b_3(\frg)$\\
\specialrule{1pt}{0pt}{0pt}
$\frn_1$ & 0 & 35 & $\frn_9$ & 6 & 12 & $\frn_{17}$ & 7 & 11\\
\specialrule{1pt}{0pt}{0pt}
$\frn_2$ & $\leq 1$ & 25 & $\frn_{10}$ & 3 & 20 & $\frn_{18}$ & 9 & 11\\
\specialrule{1pt}{0pt}{0pt}
$\frn_3$ & $\leq 2$ & 19 & $\frn_{11}$ & 7 & 14 & $\frn_{19}$ & 8 & 11\\
\specialrule{1pt}{0pt}{0pt}
$\frn_4$ & 2 & 21 & $\frn_{12}$ & 6 & 14 & $\frn_{20}$ & 9 & 8\\
\specialrule{1pt}{0pt}{0pt}
$\frn_5$ & 5 & 18 & $\frn_{13}$ & 9 & 11 & $\frn_{21}$ & 10 & 10\\
\specialrule{1pt}{0pt}{0pt}
$\frn_6$ & 4 & 18 & $\frn_{14}$ & 9 & 11 & $\frn_{22}$ & 10 & 10\\
\specialrule{1pt}{0pt}{0pt}
$\frn_7$ & 5 & 18 & $\frn_{15}$ & 8 & 11 & $\frn_{23}$ & 8 & 10\\
\specialrule{1pt}{0pt}{0pt}
$\frn_8$ & 6 & 15 & $\frn_{16}$ & 6 & 11 & $\frn_{24}$ & 10 & 7\\
\bottomrule[1pt]
\end{tabu}}
\end{table}

\begin{table}[H]
\centering
\caption{Moduli spaces of coclosed $\G_2$-structures on indecomposable Lie algebras}\label{Table:6}
\vspace{0.25 cm}
{\tabulinesep=1.2mm
\begin{tabu}{c|c|c||c|c|c|}
\toprule[1.5pt]
$\frg$ & $\dim\calM^{\mathrm{cc}}(\frg)$ & $b_3(\frg)$ & $\frg$ & $\dim\calM^{\mathrm{cc}}(\frg)$ & $b_3(\frg)$\\
\specialrule{1.25pt}{0pt}{0pt}
$17$ & $\leq3$ & 14 & $37C$ & 5 & 17\\
\specialrule{1pt}{0pt}{0pt}
$37A$ & 4 & 18 & $37D$ & 9 & 14\\
\specialrule{1pt}{0pt}{0pt}
$37B$ & 8 & 16 & $37D_1$ & 9 & 14\\
\specialrule{1pt}{0pt}{0pt}
$37B_1$ & 8 & 16\\
\cmidrule[1.25pt]{1-3}
\end{tabu}}
\end{table}

Unfortunately, we have not been able to compute the dimensions of the moduli spaces for the Lie algebras $\frn_2$, $\frn_3$ and $17$. We can isolate the following consequences of Tables \ref{Table:5} and \ref{Table:6}:

\begin{theorem}
On a 7-dimensional non-abelian nilpotent Lie algebra $\frg$ with a coclosed $\G_2$-structure there is no relation between the dimensions of $\calM^{\mathrm{cc}}(\frg)$ and $b_3(\frg)=b_4(\frg)$. More precisely:
\begin{itemize}
    \item $\dim\calM^{\mathrm{cc}}(\frg)<b_3(\frg)$, $\dim\calM^{\mathrm{cc}}(\frg)=b_3(\frg)$ and $\dim\calM^{\mathrm{cc}}(\frg)>b_3(\frg)$ are possible on decomposable Lie algebras;
    \item $\dim\calM^{\mathrm{cc}}(\frg)<b_3(\frg)$ if $\frg$ is 2-step indecomposable.
\end{itemize}
\end{theorem}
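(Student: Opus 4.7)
The plan is to establish the theorem by direct inspection of Tables~\ref{Table:5} and~\ref{Table:6}, since the heavy lifting (computing $\dim V^{\tr{cc}}(\frg)$, $\dim\Aut(\frg)$, $\dim\Aut(\frg)_\psi$ for principal orbits) has already been carried out. There is nothing to prove beyond pointing to explicit witnesses in each regime, so the task is to certify that the tables are organized correctly and then exhibit, for each of the five claimed inequalities, at least one Lie algebra realizing it.

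For the decomposable part, I would identify three families of witnesses among $\frn_1,\ldots,\frn_{24}$. For the strict inequality $\dim\calM^{\mathrm{cc}}(\frg)<b_3(\frg)$ it suffices to point to $\frn_4$, where $2<21$ (or to essentially any low-step example, such as $\frn_5$, $\frn_7$, $\frn_{10}$, etc.). For equality, the natural witnesses are $\frn_{21}$ and $\frn_{22}$, both with $\dim\calM^{\mathrm{cc}}=10=b_3$. For the reverse strict inequality $\dim\calM^{\mathrm{cc}}(\frg)>b_3(\frg)$, the cleanest witnesses are $\frn_{20}$ ($9>8$) and $\frn_{24}$ ($10>7$). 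For the 2-step indecomposable part, one only needs to observe that every entry in Table~\ref{Table:6} satisfies $\dim\calM^{\mathrm{cc}}(\frg)<b_3(\frg)$: explicitly, $37A$, $37B$, $37B_1$, $37C$, $37D$ and $37D_1$ all have $\dim\calM^{\mathrm{cc}}(\frg)$ strictly smaller than $b_3(\frg)$, and the single uncertain entry $17$ still satisfies $\dim\calM^{\mathrm{cc}}(17)\leq 3<14=b_3(17)$, so the strict inequality holds regardless of which value in the admissible range is attained.

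The main obstacle is not in the argument itself but in ensuring that the tabulated dimensions are actually principal-orbit dimensions, i.e.\ that the stabilizers used are minimal. For the decomposable cases where $\dim\Aut(\frg)_\psi=0$ (visible in Tables~\ref{Table:3_1}--\ref{Table:3_2}) this is automatic, since a zero-dimensional stabilizer cannot be made smaller by perturbation. For the residual cases $\frn_5$ and $37D_1$ in Tables~\ref{Table:3_2}--\ref{Table:4}, the perturbations $\psi_\varepsilon=\psi+\varepsilon(e^{2356}+e^{2457})$ and $\psi_\varepsilon=\psi+\varepsilon(e^{2367}+e^{3456})$ already exhibit a nearby 4-form with trivial stabilizer, so the values recorded in Tables~\ref{Table:5}--\ref{Table:6} are the correct principal-orbit dimensions. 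For $\frn_2$, $\frn_3$ and $17$, only upper bounds are available, but in each case the inequality needed for the theorem survives the uncertainty (these Lie algebras are not used as witnesses for either the equality or the $>$ case).

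Putting these observations together yields the two bullet points of the statement. The only delicate point in writing it up cleanly is to acknowledge that the witnesses for the $\dim\calM^{\mathrm{cc}}(\frg)>b_3(\frg)$ case are precisely the high-nilpotency-step decomposable algebras $\frn_{20}$ and $\frn_{24}$, mirroring the analogous phenomenon observed in the closed setting where nilpotency step bounds the relationship between $\dim\calM^{\mathrm{c}}(\frg)$ and $b_3(\frg)$; this is worth flagging in a remark, paralleling the remark after the corresponding theorem in Section~\ref{Sec:3}.
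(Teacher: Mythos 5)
Your proof is correct and takes essentially the same approach as the paper, which likewise derives the theorem by direct inspection of Tables~\ref{Table:5} and~\ref{Table:6}: the witnesses you name ($\frn_4$ etc.\ for $<$, $\frn_{21},\frn_{22}$ for $=$, $\frn_{20},\frn_{24}$ for $>$, and all seven indecomposable 2-step algebras for the second bullet, with $17$ handled via its upper bound) match the data, and your observation that the zero-dimensional stabilizers are automatically minimal is the same justification the paper relies on. One caveat on your closing aside: the paper's own remark following this theorem states the \emph{opposite} of what you suggest, namely that in the coclosed case the nilpotency step does \emph{not} control the sign of $\dim\calM^{\mathrm{cc}}(\frg)-b_3(\frg)$ (the 4-step algebras $\frn_{20}$, $\frn_{21}$, $\frn_{23}$ realize $>$, $=$, $<$ respectively), so that proposed remark should be dropped or inverted; this does not affect the validity of the proof itself.
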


\begin{remark}
    In contrast with the closed case, for Lie algebras with coclosed $\G_2$-structure we do not have a relation between the dimension of the moduli space and the nilpotency step of $\frg$. For example, the Lie algebras $\frn_{20}$, $\frn_{21}$ and $\frn_{23}$ are 4-step nilpotent, but $$\dim\calM^{\tr{cc}}(\frn_{20})>b_3(\frn_{20}),\quad\dim\calM^{\tr{cc}}(\frn_{21})=b_3(\frn_{21}),\quad\dim\calM^{\tr{cc}}(\frn_{23})<b_3(\frn_{23}).$$
\end{remark}

One can wonder whether the uniform bound $\dim \calM^{\mathrm{cc}}(\frg) < b_3(\frg)$ on indecomposable Lie algebras holds because we restrict to nilpotency step 2. This is indeed the case, as the following example shows:

\begin{example}\label{ex:coclosed}
The Lie algebra $\frg=137A=(0,0,0,0,12,34,15+36)$ is 7-dimensional, 3-step nilpotent, has $\dim V^{\tr{cc}}(\frg)=24$, $b_3(\frg)=8$ and $\dim\Aut(\frg)=14$. It carries the (purely) coclosed $\G_2$-structure $\psi=\frac{1}{2}\omega^2+\psi_-\wedge\eta$ with
\begin{itemize}
    \item $\omega=e^{13}-e^{24}+e^{56}$;
    \item $\psi_-=e^{126}-e^{145}-e^{235}-e^{345}+e^{346}$;
    \item $\eta=e^7$.
\end{itemize}
One checks that $\dim\Aut(\frg)_\psi=0$, hence $\dim\calM^{\mathrm{cc}}(\frg)=24-14=10>8=b_3(\frg)$. Similarly, one can check that $\dim\calM^{\tr{cc}}(247Q)=10=b_3(247Q)$ and $\dim\calM^{\tr{cc}}(357A)=8<14=b_3(357A)$, where $247Q$ and $357A$ are also 3-step nilpotent indecomposable Lie algebras carrying the (purely) coclosed $\G_2$-structure described in \cite[Table~4]{BGM21}.
\end{example}

\begin{corollary}
On a 7-dimensional nilmanifold $M=\Gamma\backslash G$ endowed with left-invariant coclosed $\G_2$-structures there is no relation between the dimension of $\calM^{\mathrm{cc}}(\frg)$ and $b_3(M)$.    
\end{corollary}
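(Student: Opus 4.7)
The plan is to reduce the statement to the preceding theorem about Lie algebras. The key observations are: $(i)$ by Nomizu's theorem cited in the introduction, $b_3(M)=b_3(\Gamma\backslash G)=b_3(\frg)$ for any lattice $\Gamma\subset G$; $(ii)$ the notation $\calM^{\mathrm{cc}}(\frg)$ appearing in the statement refers intrinsically to the Lie algebra, and its dimension does not depend on the choice of lattice; $(iii)$ each connected, simply connected nilpotent Lie group $G$ whose Lie algebra $\frg$ appears in Tables~\ref{Table:5} and \ref{Table:6} actually admits a lattice, by Mal'cev's criterion: the structure constants, as given by Salamon's notation, are rational, so $\frg$ admits a rational structure, and hence $G$ admits a cocompact discrete subgroup $\Gamma$.

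With these ingredients in place, I would argue as follows. First, appeal to the theorem to exhibit three Lie algebras $\frg',\frg'',\frg'''$ (decomposable, from Table~\ref{Table:5}) realizing, respectively, $\dim\calM^{\mathrm{cc}}(\frg')<b_3(\frg')$, $\dim\calM^{\mathrm{cc}}(\frg'')=b_3(\frg'')$ and $\dim\calM^{\mathrm{cc}}(\frg''')>b_3(\frg''')$. Concrete choices are readily available from the table, e.g.\ $\frn_8$, $\frn_{21}$ and $\frn_{24}$, respectively. For each such Lie algebra $\frg$, let $G$ be the associated connected, simply connected nilpotent Lie group, and fix a lattice $\Gamma$ (existing by Mal'cev). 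The resulting nilmanifold $M=\Gamma\backslash G$ carries left-invariant coclosed $\G_2$-structures (by the choice of $\frg$). Then Nomizu's theorem gives $b_3(M)=b_3(\frg)$, so the three inequalities transfer verbatim from the Lie algebra to the nilmanifold setting, establishing the corollary.

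The only point that requires some care is verifying the existence of lattices in the relevant Lie groups; this is not an obstacle, however, since all the Lie algebras under consideration are given explicitly in terms of bases of $1$-forms with integer (hence rational) structure constants, and so Mal'cev's theorem applies uniformly. No additional moduli-theoretic input beyond the already-proven theorem is needed.
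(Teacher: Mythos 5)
Your proof is correct and follows exactly the route the paper intends (the corollary is stated as an immediate consequence of the preceding theorem): transfer the three strict/equality cases from Table~\ref{Table:5} to nilmanifolds via Nomizu's isomorphism $b_3(M)=b_3(\frg)$, with Mal'cev's criterion guaranteeing the existence of lattices since the structure constants are rational. Your concrete choices $\frn_8$, $\frn_{21}$, $\frn_{24}$ are consistent with the table, so nothing is missing.
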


As a by-product of our study of the automorphism group of coclosed $\G_2$-structures on nilpotent Lie algebras we obtain the following result:

\begin{theorem}
The automorphism group of a coclosed $\G_2$-structure need not be abelian.    
\end{theorem}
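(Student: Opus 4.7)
The plan is to exhibit an explicit coclosed $\G_2$-structure whose automorphism group is non-abelian, leveraging the stabilizer computations already tabulated.

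The quickest example lives on the abelian Lie algebra $\frn_1 = \R^7$. Here $\Aut(\frn_1) = \GL(7, \R)$ and the stabilizer of any $\G_2$ 4-form $\psi$ contains $\G_2$, which fixes $\psi$ by construction, and also $-\Id$, which fixes $\psi$ because it acts trivially on $\Lambda^4\R^{7*}$ (as $(-1)^4 = 1$). Thus $\Aut(\frn_1)_\psi = \pm\G_2$, a 14-dimensional non-abelian Lie group, consistent with Table~\ref{Table:3_1}. At the manifold level, on $M = \mathbb{T}^7$ with the induced left-invariant $\psi$ one has $\Diff(M)_\psi \supseteq \mathbb{T}^7 \rtimes (\Z/2\Z)$, with the non-trivial factor acting by $x \mapsto -x$; this is manifestly non-abelian. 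Notice that $-\Id$ sends $\varphi = *\psi$ to $-\varphi$ (odd degree), so it does not belong to $\Diff(M)_\varphi$; this is compatible with the Podest\`a--Raffero theorem that $\Diff(M)_\varphi$ is abelian for closed $\varphi$.

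For a more illustrative example on a genuinely non-abelian nilpotent Lie algebra, I would turn to entries of Tables~\ref{Table:3_1}--\ref{Table:4} with $\dim \Aut(\frg)_\psi \geq 2$, namely $\frn_2, \frn_3, 17, 37D_1$. Pick, for instance, $\frg = 17 = (0,0,0,0,0,0,12+34+56)$, whose coclosed $\G_2$-form $\psi$ is described in Appendix~\ref{appendix:2} and whose stabilizer has dimension 4. Using the SageMath routine already developed, I would extract an explicit basis $D_1, D_2, D_3, D_4$ of $\Der(17)_\psi$ as $7\times 7$ matrices and compute the commutators $[D_i, D_j] = D_iD_j - D_jD_i$. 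Finding a single non-vanishing commutator proves that $\Der(17)_\psi$, and hence the compact Lie group $\Aut(17)_\psi$, is non-abelian.

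The main obstacle is verificational rather than conceptual: a priori the 4-dimensional compact stabilizer could be the abelian $\R^4$ instead of $\mathfrak{u}(2)$ or $\R \oplus \mathfrak{so}(3)$, and only a direct bracket computation among the explicit derivations can distinguish the cases. Since $\psi$ on $17$ is built via \cite[Theorem~4.1]{BGM21} from an $\mathrm{SU}(3)$-structure on a codimension-one subspace, one expects a residual compact non-abelian symmetry to survive and the computation to succeed; failing that, the same strategy applied to $\frn_2$, $\frn_3$ or $37D_1$ should provide the required counterexample on a non-abelian nilpotent Lie algebra, complementing the trivial abelian case above.
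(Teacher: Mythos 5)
Your second, ``illustrative'' argument is exactly the paper's strategy --- scan Tables~\ref{Table:3_1}--\ref{Table:4} for a stabilizer of dimension at least $2$ and decide non-abelianness by computing brackets of explicit derivations --- but you stop precisely at the decisive step. You say you \emph{would} extract a basis of $\Der(17)_\psi$ and \emph{expect} some commutator not to vanish, while conceding that a $4$-dimensional compact stabilizer could perfectly well be abelian; that verification is the entire content of the theorem, so what you have is a plan, not a proof. The paper performs the computation for $\frg=\frn_2$ (one of your fallback candidates, not $17$, for which nothing is confirmed): it checks that the $4$-dimensional Lie algebra $\Der(\frn_2)_\psi$ contains a non-abelian $3$-dimensional subalgebra, so already the \emph{connected} stabilizer $\Aut(\frn_2)_\psi$ is non-abelian, and it then goes further than your outline by exhibiting two explicit finite-order automorphisms $A_{\pi/2},B_{\pi/2}$ of the $\R^4$-factor of $\frn_2\cong\R^4\oplus\frh_3$ that preserve the lattice $\Z^4$ and do not commute, hence descend to non-commuting elements of $\Diff(M)_\psi$ on a compact nilmanifold $M=(\Z^4\times\Gamma)\backslash(\R^4\times H_3)$.

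Your first example, the flat structure on $\mathbb{T}^7$, is correct as a computation but proves only a degenerate reading of the statement. All the non-abelianness of $\mathbb{T}^7\rtimes\langle-\Id\rangle$ sits in a component not isotopic to the identity: $-\Id$ acts as $-1$ on $H^1(\mathbb{T}^7)$, so it does not lie in $\Diff_0(\mathbb{T}^7)$, and the connected stabilizer $\Diff_0(\mathbb{T}^7)_\psi=\mathbb{T}^7$ is abelian. Since the Podest\`a--Raffero theorem quoted in the paper is a statement about the connected group $\Diff_0(M)_\varphi$ (it asserts that this is an abelian \emph{Lie group} of dimension at most $\min\{b_2(M),6\}$), non-abelianness supplied solely by disconnected, finite-order symmetries does not establish the advertised contrast with the closed case --- closed $\G_2$-structures on $\mathbb{T}^7$ also admit non-commuting lattice-preserving symmetries in $\G_2\cap\GL(7,\Z)$. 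Your remark that the example is ``compatible'' with Podest\`a--Raffero because $-\Id$ negates $\varphi$ rests on reading their theorem as a statement about the full stabilizer, which it is not. The substantive assertion, and the one the paper actually proves, is that the stabilizer of a coclosed $\psi$ can have a non-abelian \emph{Lie algebra}; that is the computation your proposal leaves undone.
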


\begin{proof}
Consider the decomposable Lie algebra $\frn_2$ with the coclosed $\G_2$-structure $\psi$ given in Appendix \ref{appendix:2}. As we saw in Table~\ref{Table:3_1}, the Lie algebra $\Der(\frn_2)_\psi$ has dimension 4; moreover, it contains $\mathfrak{sl}_2(\R)$ as a subalgebra, hence it is not abelian. Then $\Aut(\frn_2)_\psi$ is also not abelian. Notice that $\frn_2\cong\R^4\oplus\frh_3$, where $\frh_3$ denotes the Heisenberg Lie algebra. One can pick two non-commuting derivations in $\Der(\frn_2)_\psi$ to give the following automorphisms of the abelian factor $\R^4$ in $\R^4\times H_3$:
\[
A_t=\begin{pmatrix}
    \cos t & 0 & \sin t & 0\\
    0 & \cos t & 0 & -\sin t\\
    -\sin t & 0 & \cos t & 0\\
    0 & \sin t & 0 & \cos t
\end{pmatrix} \quad \tr{and} \quad
B_s=\begin{pmatrix}
    \cos s & 0 & 0 & -\sin s\\
    0 & \cos s & -\sin s & 0\\
    0 & \sin s & \cos s & 0\\
    \sin s & 0 & 0 & \cos s
\end{pmatrix}.
\]
Choosing $t=s=\frac{\pi}{2}$, we obtain two non-commuting automorphisms of the Lie group $\R^4$ which preserve the standard lattice $\Z^4\subset\R^4$. Together with the identity on the factor $H_3$, which preserves any given lattice $\Gamma\subset H_3$, they define non-commuting elements in $\Diff(M)_\psi$, where $M$ is the nilmanifold $(\Z^4\times\Gamma)\backslash(\R^4\times H_3)$. 
\end{proof}

\newpage

\appendix

\section{The closed package}\label{appendix:1}

\begin{table}[H]
\centering
\caption{Closed $\G_2$-structures on 7-dimensional nilpotent Lie algebras}\label{Table:7}
\vspace{0.25 cm}
{\tabulinesep=1.2mm
\begin{tabu}{c|c|}
\toprule[1.5pt]
$\frg$ & $\varphi$\\
\specialrule{1pt}{0pt}{0pt}
$\frn_1$ & $e^{127}+e^{347}+e^{567}+e^{135}-e^{146}-e^{236}-e^{245}$\\
\specialrule{1pt}{0pt}{0pt}
$\frn_4$ & $e^{123}+e^{147}+e^{156}+e^{245}+e^{267}-e^{346}+e^{357}$\\
\specialrule{1pt}{0pt}{0pt}
$\frn_{10}$ & $e^{123}+e^{145}+e^{167}-e^{246}+2e^{257}+e^{347}+e^{356}$\\
\specialrule{1.25pt}{0pt}{0pt}
$1357N(1)$ & $\begin{array}{c}
           12\sqrt{3}\left(e^{126}+e^{134}+e^{135}-e^{136}-e^{157}+e^{237}\right.\\
           \left.-e^{245}-e^{267}-e^{356}+e^{467}\right)\end{array}$\\
\specialrule{1pt}{0pt}{0pt}
$1357S(-3)$ & $\begin{array}{c}
		     -14\sqrt{70}\left(2e^{123}+2e^{146}-e^{157}+e^{167}+2e^{245}+2e^{246}\right.\\
		      \left.+e^{257}+3e^{267}+e^{347}+4e^{356}+2e^{456}\right)
		\end{array}$\\
\specialrule{1pt}{0pt}{0pt}
$147E1(2)$ & $\begin{array}{c}
            -6\sqrt{6}\left(2e^{123}-2e^{124}+2e^{135}-e^{157}-e^{167}\right.\\
            \left.+2e^{236}+e^{257}+e^{347}+2e^{456}\right)\end{array}$\\
\specialrule{1pt}{0pt}{0pt}
$257A$ & $e^{124}-e^{135}-e^{167}+e^{236}-e^{257}-e^{347}+e^{456}$\\
\specialrule{1pt}{0pt}{0pt}
$257B$ & $-e^{125}+e^{134}+e^{167}-e^{237}-e^{246}-e^{356}+e^{457}$\\
\specialrule{1pt}{0pt}{0pt}
$247A$ & $e^{123}+e^{145}+e^{167}-e^{246}+e^{257}+e^{347}+e^{356}$\\
\specialrule{1pt}{0pt}{0pt}
$247L$ & $e^{123}+e^{126}-e^{137}-e^{145}-2e^{167}+e^{246}-e^{257}+e^{347}+e^{356}$\\
\specialrule{1pt}{0pt}{0pt}
$12457H$ & $e^{125}+e^{127}+e^{136}+e^{147}+e^{234}-e^{256}-e^{267}+e^{357}-e^{456}$\\
\specialrule{1pt}{0pt}{0pt}
$12457I$ & $\begin{array}{c}
12\sqrt{6}\left(e^{124}-e^{125}-e^{127}-e^{136}-e^{147}-e^{234}\right.\\
\left.+e^{267}-e^{345}-e^{357}+e^{456}\right)
\end{array}$\\
\bottomrule[1pt]
\end{tabu}}
\end{table} 

\begin{remark}
There seems to be a small typo in the construction of the $\G_2$-structure on $\frn_{10}$: to obtain a positive-definite metric, one should take $\psi_2^+=e^{123}+e^{145}-e^{246}+e^{356}$ in the proof of \cite[Theorem 4]{CF11}.
\end{remark}

Next we add, for a particular case, the SageMath 9.3 code used in the article. To compute the Betti numbers, we set the boundary operator of the Lie algebra and then we compute the homology with the next code.

\begin{lstlisting}[language=Python]
############################## Betti numbers of n4
E = ExteriorAlgebra(SR,'e',7);
E.inject_variables();
alg_nil_g2 = {"\\frn_4":{(0,1):e4,(0,2):e5}};
d = E.coboundary(alg_nil_g2["\\frn_4"]);
cplx_dR = d.chain_complex();
cplx_dR.homology()
\end{lstlisting}

To compute the dimension of $V^{\tr{c}}(\frg)$ we compute the dimension of $Z^3(\frg^*)$.

\begin{lstlisting}[language=Python]
############################## Dimension of V^c(n4)
E = ExteriorAlgebra(SR,'e',7);
E.inject_variables();
alg_nil_g2 = {"\\frn_4":{(0,1):e4,(0,2):e5}};
d = E.coboundary(alg_nil_g2["\\frn_4"]);
varphicoeff = var(["varphi%d%d%d"%(i,j,k)\
for i in range(len(E.basis(1)))\
for j in range(i+1, len(E.basis(1)))\
for k in range(j+1, len(E.basis(1)))]);
varphi = sum(varphicoeff[l]*list(E.basis(3))[l] for l in range(len(E.basis(3))));
eq0 = [d(varphi).interior_product(c).constant_coefficient()==0 for c in E.basis(4)];
m = matrix([[lhs.coefficient(x) for x in varphicoeff] for e in eq0 for lhs in [e.lhs()]]);
show(dim(m.right_kernel()));
\end{lstlisting}

The following code compute the dimension of the automorphism group.

\begin{lstlisting}[language=Python]
############################## Dimension of Aut(n4)
alg_nil_g2 = {"\\frn_4":{('e0','e1'):{'e4':-1},('e0','e2'):{'e5':-1}}};
L.<e0,e1,e2,e3,e4,e5,e6> = LieAlgebra(SR,alg_nil_g2["\\frn_4"]);
show(len(L.derivations_basis()));
\end{lstlisting}

Finally, to compute the dimension of the stabilizer, we use the following code.

\begin{lstlisting}[language=Python]
############################## Derivations of n4
alg_nil_g2 = {"\\frn_4":{('e0','e1'):{'e4':-1},('e0','e2'):{'e5':-1}}};
L.<e0,e1,e2,e3,e4,e5,e6> = LieAlgebra(SR,alg_nil_g2["\\frn_4"]);
d = len(L.derivations_basis());
X = [var("x%d"%i) for i in range(d)];
Dmat = sum(X[i]*L.derivations_basis()[i] for i in range(d));
############################## Closed G2-form of n4
E = ExteriorAlgebra(SR,'e',7);
E.inject_variables();
alg_nil_g2 = {"\\frn_4":{(0,1):e4,(0,2):e5}};
d = E.coboundary(alg_nil_g2["\\frn_4"]);
var(["f%d"%(i) for i in range(len(E.basis(1)))]);
omega = e0*e3+e1*e5+e2*e4;
eta = e6;
psi = e0*e1*e2+e0*e4*e5+e1*e3*e4-e2*e3*e5;
varphi = omega*eta+psi;
############################## Dimension stabilizer
D = E.lift_morphism(Dmat);
eq = [];
for i in range(len(E.gens())):
	for j in range(i+1,len(E.gens())):
		for k in range(j+1,len(E.gens())):
			eq1 = (varphi.interior_product(D(E.gens()[i])).interior_product(E.gens()[j]).interior_product(E.gens()[k])).constant_coefficient();
			eq2 = (varphi.interior_product(E.gens()[i]).interior_product(D(E.gens()[j])).interior_product(E.gens()[k])).constant_coefficient();
			eq3 = (varphi.interior_product(E.gens()[i]).interior_product(E.gens()[j]).interior_product(D(E.gens()[k]))).constant_coefficient();
			eq.append(eq1+eq2+eq3);
eq0 = eq;
eq = [eq0[i]==0 for i in range(len(eq))];
m = matrix([[lhs.coefficient(x) for x in X] for e in eq for lhs in [e.lhs()]]);
show(dim(m.right_kernel()));
\end{lstlisting}

\section{The coclosed package}\label{appendix:2}

\begin{table}[H]
\centering
\caption{Coclosed $\G_2$-structures on decomposable nilpotent Lie algebras}\label{Table:8}
\vspace{0.25 cm}
{\tabulinesep=1.2mm
\begin{tabu}{c|c|}
\toprule[1.5pt]
$\frg$ & $\psi$\\
\specialrule{1pt}{0pt}{0pt}
$\frn_1$ & $e^{1234}+e^{1256}+e^{1367}+e^{1457}+e^{2357}-e^{2467}+e^{3456}$\\
\specialrule{1pt}{0pt}{0pt}
$\frn_2$ & $-e^{1234}+e^{1257}+e^{1356}-e^{1467}+e^{2367}+e^{2456}-e^{3457}$\\
\specialrule{1pt}{0pt}{0pt}
$\frn_3$ & $-e^{1234}+e^{1256}+e^{1357}+e^{1467}-e^{2367}+e^{2457}-e^{3456}$\\
\specialrule{1pt}{0pt}{0pt}
$\frn_4$ & $-2e^{1234}-e^{1256}+e^{1357}+3e^{1456}-e^{1467}-2e^{2356}+e^{2367}+e^{2457}+e^{3467}$\\
\specialrule{1pt}{0pt}{0pt}
$\frn_5$ & $-e^{1234}+e^{1257}+e^{1267}-e^{1356}+e^{1467}-e^{2367}-e^{2456}-e^{3457}$\\
\specialrule{1pt}{0pt}{0pt}
$\frn_6$ & $e^{1234}+e^{1256}+e^{1257}+e^{1267}+e^{1467}-e^{2367}+e^{3456}-e^{3457}$\\
\specialrule{1pt}{0pt}{0pt}
$\frn_7$ & $e^{1234}-e^{1257}-e^{1356}-2e^{1467}+e^{2367}-e^{2456}+e^{3457}$\\
\specialrule{1pt}{0pt}{0pt}
$\frn_8$ & $\begin{array}{c}
    2e^{1234}+e^{1245}-2e^{1345}-3e^{1236}-e^{2346}+e^{1256}-6e^{1356}+e^{2456}\\
    +e^{2347}+e^{1357}-e^{3457}+e^{2367}+e^{1467}-e^{2567}
\end{array}$\\
\specialrule{1pt}{0pt}{0pt}
$\frn_9$ & $e^{1234}-e^{1257}-e^{1356}-2e^{1467}+e^{2367}-e^{2456}+e^{3457}$\\
\specialrule{1pt}{0pt}{0pt}
$\frn_{10}$ & $\begin{array}{c}
\tfrac{6}{5}e^{1235}-\tfrac{34}{25}e^{1237}+e^{1256}+\tfrac{12}{25}e^{1257}+\tfrac{8}{5}e^{1345}+e^{1346}\\
-\tfrac{12}{25}e^{1347}-\tfrac{16}{25}e^{1457}+2e^{2345}-\tfrac{6}{5}e^{2347}+e^{2467}-e^{3567}
\end{array}$\\
\specialrule{1pt}{0pt}{0pt}
$\frn_{11}$ & $e^{2345}-e^{1236}+e^{1246}-2e^{1456}-e^{1237}+e^{1357}-e^{1457}-e^{3467}-e^{2567}$\\
\specialrule{1pt}{0pt}{0pt}
$\frn_{12}$ & $\begin{array}{c}
-35e^{1234}+35e^{1245}+7e^{2345}+14e^{1236}-7e^{1346}+7e^{1256}+7e^{2356}-7e^{1456}\\
+e^{1237}+e^{1347}+e^{3457}+e^{1367}+e^{2467}+e^{1567}
\end{array}$\\
\specialrule{1pt}{0pt}{0pt}
$\frn_{13}$ & $-2e^{1245}-2e^{1345}-2e^{2345}-\frac{1}{2}e^{1236}-2e^{2456}-e^{1247}+e^{2357}-e^{3467}+e^{1567}$\\
\specialrule{1pt}{0pt}{0pt}
$\frn_{14}$ & $-e^{1345}-e^{1236}-e^{2456}-e^{1247}+e^{2357}-e^{3467}+e^{1567}$\\
\specialrule{1pt}{0pt}{0pt}
$\frn_{15}$ & $\begin{array}{c}
e^{1234}+e^{1345}+\frac{1}{2}e^{2345}-\frac{1}{2}e^{1346}-e^{2346}+\frac{3}{4}e^{1256}\\[.25cm]
-e^{1237}+e^{2457}+e^{2367}+e^{1467}-e^{3567}
\end{array}$\\
\specialrule{1pt}{0pt}{0pt}
$\frn_{16}$ & $3e^{1235}+2e^{1245}-e^{2345}-e^{1346}-e^{1256}-e^{2456}-e^{1247}+e^{3457}+e^{2367}+e^{1567}$\\
\specialrule{1pt}{0pt}{0pt}
$\frn_{17}$ & $-2e^{1235}-e^{1245}+e^{1346}-e^{2456}-e^{1237}+e^{3457}+e^{2367}+e^{2467}-e^{1567}$\\
\specialrule{1pt}{0pt}{0pt}
$\frn_{18}$ & $\begin{array}{c}
-\frac{1}{2}e^{1234}-\frac{1}{2}e^{1245}+\frac{1}{2}e^{1345}+\frac{1}{2}e^{2345}-e^{1236}+\frac{1}{2}e^{1246}-\frac{1}{2}e^{1346}+e^{2346}\\[1.5pt]
-e^{1356}-\frac{1}{2}e^{2456}+e^{1237}+e^{1257}-e^{2357}-e^{3457}-e^{1267}-e^{2367}+e^{1467}
\end{array}$\\
\specialrule{1pt}{0pt}{0pt}
$\frn_{19}$ & $\begin{array}{c}
-\frac{5}{12}e^{1234}-\frac{1}{3}e^{1235}+\frac{1}{2}e^{1245}+\frac{5}{6}e^{1346}-2e^{2346}+\frac{5}{12}e^{1356}-2e^{1456}\\[1.5pt]
-\frac{3}{5}e^{2456}+e^{1247}+e^{3457}+2e^{2367}-e^{1567}
\end{array}$\\
\specialrule{1pt}{0pt}{0pt}
$\frn_{20}$ & $-3e^{1234}+\frac{3}{2}e^{1345}-\frac{3}{2}e^{1236}-e^{1356}-\frac{3}{4}e^{2456}-e^{1257}+e^{3457}+e^{2367}-e^{1467}$\\
\specialrule{1pt}{0pt}{0pt}
$\frn_{21}$ & $\frac{5}{4}e^{1245}+e^{1346}-\frac{1}{2}e^{2346}-\frac{1}{2}e^{1356}-e^{2356}-e^{1237}-e^{3457}+e^{2467}+e^{1567}-e^{2567}$\\
\specialrule{1pt}{0pt}{0pt}
$\frn_{22}$ & $e^{1245}-e^{2346}-e^{1356}-e^{1237}-e^{1347}-e^{2357}-2e^{3457}+e^{1467}-e^{2567}$\\
\specialrule{1pt}{0pt}{0pt}
$\frn_{23}$ & $e^{1245}-e^{2346}-e^{1356}+e^{1237}+e^{3457}-e^{1467}+e^{2567}$\\
\specialrule{1pt}{0pt}{0pt}
$\frn_{24}$ & $e^{1245}+e^{1346}-e^{2356}+e^{1456}+2e^{1237}-2e^{1257}+2e^{3457}-e^{2467}-e^{1567}$\\
\bottomrule[1pt]
\end{tabu}}
\end{table}


\begin{table}[H]
\centering
\caption{Coclosed $\G_2$-structures on indecomposable 2-step nilpotent Lie algebras}\label{Table:10}
\vspace{0.25 cm}
{\tabulinesep=1.2mm
\begin{tabu}{c|c|}
\toprule[1.5pt]
$\frg$ & $\psi$\\
\specialrule{1.25pt}{0pt}{0pt}
$17$ & $-\tfrac{1}{2}e^{1234}+e^{1256}+e^{1357}+e^{1467}-e^{2367}+e^{2457}-\tfrac{1}{2}e^{3456}$\\
\specialrule{1pt}{0pt}{0pt}
$37A$ & $-e^{1234}+e^{1256}+e^{1356}+e^{1367}-e^{1457}+e^{2357}+e^{2456}+e^{2467}-e^{3456}$\\
\specialrule{1pt}{0pt}{0pt}
$37B$ & $-e^{1234}-e^{1257}-e^{1367}+e^{1456}-e^{1467}-e^{2356}+e^{2367}-e^{2467}+e^{3457}$\\
\specialrule{1pt}{0pt}{0pt}
$37B_1$ & $2e^{1234}-e^{1267}+2e^{1357}-2e^{1456}-2e^{2356}-2e^{2457}-2e^{3467}$\\
\specialrule{1pt}{0pt}{0pt}
$37C$ & $-e^{1234}-e^{1267}-e^{1356}+e^{1457}-e^{2357}-e^{2456}+e^{3467}$\\
\specialrule{1pt}{0pt}{0pt}
$37D$ & $-e^{1234}+e^{1267}-e^{1356}+e^{1357}+e^{1456}-2e^{1457}+e^{2357}-e^{2456}-e^{3467}$\\
\specialrule{1pt}{0pt}{0pt}
$37D_1$ & $e^{1234}+e^{1267}-e^{1357}-e^{1456}-e^{2356}+e^{2456}+e^{3467}$\\
\bottomrule[1pt]
\end{tabu}}
\end{table}

We include here, again for a particular example, the codes to compute the data for coclosed $\G_2$-structures. The codes to compute the Betti numbers and the dimension of the automorphism group are the same, but changing the Lie algebra. To compute the dimension of $V^{\tr{cc}}(\frg)$ we use the following code that computes the dimension of $Z^4(\frg^*)$.

\begin{lstlisting}[language=Python]
############################## Dimension of V^cc(n3)
E = ExteriorAlgebra(SR,'e',7);
E.inject_variables();
Two_step_7nil = {"\\frn_3":{(0,1):e5,(2,3):e5}};
d = E.coboundary(Two_step_7nil["\\frn_3"]);
psicoeff = var(["psi%d%d%d%d"%(i,j,k,l)\
for i in range(len(E.basis(1)))\
for j in range(i+1,len(E.basis(1)))\
for k in range(j+1,len(E.basis(1)))\
for l in range(k+1,len(E.basis(1)))]);
psi = sum(psicoeff[m]*list(E.basis(4))[m] for m in range(len(E.basis(4))));
eq0 = [d(psi).interior_product(c).constant_coefficient()==0 for c in E.basis(5)];
m = matrix([[lhs.coefficient(x) for x in psicoeff] for e in eq0 for lhs in [e.lhs()]]);
show(dim(m.right_kernel()));
\end{lstlisting}

To conclude, the next code compute the dimension of the stabilizer for some closed 4-form.

\begin{lstlisting}[language=Python]
############################## Derivations of n3
Two_step_7nil = {"\\frn_3":{('e0','e1'):{'e5':-1},('e2','e3'):{'e5':-1}}};
L.<e0,e1,e2,e3,e4,e5,e6> = LieAlgebra(SR,Two_step_7nil["\\frn_3"]);
d = len(L.derivations_basis());
X = [var("x%d"%i) for i in range(d)];
Dmat = sum(X[i]*L.derivations_basis()[i] for i in range(d));
############################## Coclosed G2-form of n3
E = ExteriorAlgebra(SR,'e',7);
E.inject_variables();
Two_step_7nil = {"\\frn_3":{(0,1):e5,(2,3):e5}};
d = E.coboundary(Two_step_7nil["\\frn_3"]);
omega = e0*e2+e1*e3+e4*e5;
psi = e0*e1*e4-e0*e3*e5+e1*e2*e5-e2*e3*e4;
eta = e6;
starphi = (1/2)*(omega*omega)+psi*eta;
############################## Dimension stabilizer
D = E.lift_morphism(Dmat);
eq = [];
for i in range(len(E.gens())):
	for j in range(i+1,len(E.gens())):
		for k in range(j+1,len(E.gens())):
			for l in range(k+1,len(E.gens())):
				eq1 = (starphi.interior_product(D(E.gens()[i])).interior_product(E.gens()[j]).interior_product(E.gens()[k]).interior_product(E.gens()[l])).constant_coefficient();
				eq2 = (starphi.interior_product(E.gens()[i]).interior_product(D(E.gens()[j])).interior_product(E.gens()[k]).interior_product(E.gens()[l])).constant_coefficient();
				eq3 = (starphi.interior_product(E.gens()[i]).interior_product(E.gens()[j]).interior_product(D(E.gens()[k])).interior_product(E.gens()[l])).constant_coefficient();
				eq4 = (starphi.interior_product(E.gens()[i]).interior_product(E.gens()[j]).interior_product(E.gens()[k]).interior_product(D(E.gens()[l]))).constant_coefficient();
				eq.append(eq1+eq2+eq3+eq4);
eq0 = eq;
eq = [eq0[i]==0 for i in range(len(eq))];
m = matrix([[lhs.coefficient(x) for x in X] for e in eq for lhs in [e.lhs()]]);
show(dim(m.right_kernel()));
\end{lstlisting}

\bibliographystyle{plain}
\bibliography{biblio}

\end{document}